\newtheorem*{rep@theorem}{\rep@title}
\newcommand{\newreptheorem}[2]{%
\newenvironment{rep#1}[1]{%
 \def\rep@title{#2 \ref{##1}}%
 \begin{rep@theorem}}%
 {\end{rep@theorem}}}
\newtheorem{theorem}{Theorem}[section]
\newtheorem{corollary}[theorem]{Corollary}
\newtheorem{lemma}[theorem]{Lemma}
\newtheorem{question}[theorem]{Question}
\newtheorem{remark}[theorem]{Remark}
\newtheorem{conjecture}[theorem]{Conjecture}
\newtheorem{exam}[theorem]{Example}
\def \Z{{\mathbb Z}}
\def \Zbb{{\mathbb Z}}
\def \Cbb{{\mathbb C}}
\def\bZ{\mathbb{Z}}
\def\bfo{\mathbf{0}}
\def\bfm{\mathbf{m}}
\def\bfx{\mathbf{x}}
\def\bfy{\mathbf{y}}
\def\bfz{\mathbf{z}}
\def\bfu{\mathbf{u}}
\def\bfv{\mathbf{v}}
\def\mod{\textsf{mod }}
\def\orb{\textsf{orb}}
\def\l({\left(}
\def\r){\right)}
\def\lv{\left\vert}
\def\rv{\right\vert}
\def\lo{\left\{}
\def\ro{\right\}}
\def\lV{\left\Vert}
\def\rV{\right\Vert}
\begin{document}
\title{On a theorem of Mattila in the p-adic setting}
\author{Boqing Xue\thanks{Institute of Mathematical Sciences, ShanghaiTech University. Email: xuebq@shagnhaitech.edu.cn}\and Thang Pham\thanks{University of Science, Vietnam National University, Hanoi. Email: thangpham.math@vnu.edu.vn}\and 
Le Q. Hung\thanks{Hanoi University of Science and Technology, Hanoi. Email: hung.lequang@hust.edu.vn}\and Le Q. Ham\thanks{Vietnam Institute of Educational Sciences. Email: hamlq2022@gmail.com}\and Nguyen D. Phuong\thanks{People’s Security Academy, Ha
noi, Vietnam. Email: duyphuong78@gmail.com}  \thanks{Author ordering is randomized.}}
\maketitle
\begin{abstract}
Let $A, B$ be subsets of $(\mathbb{Z}/p^r\mathbb{Z})^2$. In this note, we provide conditions on the densities of $A$ and $B$ such that $|gA-B|\gg p^{2r}$ for a positive proportion of $g\in SO_2(\mathbb{Z}/p^r\mathbb{Z})$. The conditions are sharp up to constant factors in the unbalanced case, and the proof makes use of tools from discrete Fourier analysis and results in restriction/extension theory.
\end{abstract}
\section{Introduction}
We begin with the following question. 
\begin{question}
Let $p$ be a prime, and $A$, $B$ be subsets in $\mathbb{F}_p^d$. Under what conditions on $A$ and $B$ do we have $|gA-B|\gg p^d$
for a positive proportion of $g\in SO_d(\mathbb{F}_p)$? 
\end{question}

The original version of this question was posed in the Euclidean setting and has a well-known history, which we will not detail here. It is necessary to mention that for $A, B\subset \mathbb{R}^d$ with $\dim_H(A)+\dim_H(B)>d$, Mattila \cite{Mattila} proved that if $$\frac{(d-1)}{d}\dim_H(A)+\dim_H(B)>d ~~\mbox{or}~~\dim_H(B)>\frac{d+1}{2},$$ then $\mathcal{L}^d(gA-B)>0$ for almost all orthogonal matrices $g$ in $\mathbb{R}^d$. Here, by $\dim_H$ and $\mathcal{L}(\cdot )$ we mean the Hausdorff dimension and the Lebesgue measure in $\mathbb{R}^d$, respectively. In two dimensions with $\dim_H(A)=\dim_H(B)=t$, he asked in the survey paper \cite{M2023} whether or not the condition $t>5/4$ would be enough to guarantee that $\mathcal{L}(gA-B)>0$ by using the techniques due to Guth, Iosevich, Ou, Wang in \cite{alex-fal} on the Falconer distance problem. 

In a recent paper, Pham and Yoo \cite{P.Y.2023} answered this question affirmatively in the prime field setting by showing that if $|A|=|B|\gg p^{5/4}$ and $p\equiv 3\pmod 4$, then, for a positive proportion of $g\in SO_2(\mathbb{F}_p)$, we have $|gA-B|\gg p^2$. 
When $p\equiv 1\pmod 4$, their method implies a weaker exponent, namely, $3/2$ instead of $5/4$. Notice that these exponents cannot be reduced to a number less than $1$, to see this, one takes two sets $A$ and $B$ being on a circle centered at the origin of radius $1$, then it is clear that $|gA-B|\sim |A||B|$. 

In this note, we are interested in studying this topic in the plane over a finite p-adic ring. In particular, we consider the following question. Write $\delta_A=|A|/p^{2r}$ for any set $A\subset (\Z/p^r\Z)^2$. 

\begin{question}
Let $A, B$ be sets in $(\mathbb{Z}/p^r\mathbb{Z})^2$. 
Under what conditions on $\delta_A$ and $\delta_B$ do we have that $|gA-B|\gg p^{2r}$ for a positive proportion of $g\in SO_2(\mathbb{Z}/p^r\mathbb{Z})$? 
\end{question}

The following examples show that in order to have $|gA-B|\gg p^{2r}$, the sets $A$ and $B$ cannot both be small.



\begin{exam}\label{ex1.3}
    Let $X=\{\bfx \in (\Z/p^r\Z)^2:\, \bfx\equiv \bfo \pmod p\}$.
    Let $A$ (and $B$) be disjoint unions of $m$ (and $n$, respectively) cosets of $X$, where $1\leq m,n\leq p^2$. So, $|A|=mp^{2r-2}$ and $|B|=np^{2r-2}$. One sees that $|g A|\leq mp^{2r-2}$, and $|gA-B|\leq mnp^{2r-2}$.
\end{exam}

Our main theorem reads as follows. 

\begin{theorem}\label{thm:9.9}
    Let $p$ be a prime with $p\equiv 3 \pmod 4$, and $r$ be a positive integer. Let $A,B \subset (\Zbb /p^r \Zbb)^2$ be such that $\delta_A^{1/2}\cdot \delta_B\ge 2p^{-1}$. Then for a positive proportion of $g \in SO_2(\Zbb /p^r \Zbb)$, we have
    \[ \lv gA -B\rv \gg p^{2r}. \] 
\end{theorem}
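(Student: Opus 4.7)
I would follow the Cauchy--Schwarz and Fourier/restriction strategy of Pham--Yoo \cite{P.Y.2023}, with additional care to handle the extra $p$-adic structure of $\Zbb/p^r\Zbb$.

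\emph{Step 1 (Energy reduction).} For each $g\in SO_2(\Zbb/p^r\Zbb)$, define the energy
\[ E(g) := \#\bigl\{(a_1,a_2,b_1,b_2)\in A^2\times B^2 : g(a_1-a_2)=b_1-b_2\bigr\}. \]
Setting $N_g(z)=\#\{(a,b)\in A\times B: ga-b=z\}$, the identities $|gA-B|=|\supp N_g|$ and $\sum_z N_g(z)=|A||B|$ combined with Cauchy--Schwarz give $|gA-B|\ge |A|^2|B|^2/E(g)$. Thus it suffices to establish the averaged upper bound
\[ \sum_{g\in SO_2(\Zbb/p^r\Zbb)}E(g)\ \ll\ |SO_2(\Zbb/p^r\Zbb)|\cdot \frac{|A|^2|B|^2}{p^{2r}}, \]
since then Markov's inequality yields $E(g)\ll|A|^2|B|^2/p^{2r}$, and hence $|gA-B|\gg p^{2r}$, on a positive-proportion subset of $g$.

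\emph{Step 2 (Swap and stratify).} Reordering summation gives
\[ \sum_g E(g) = \sum_{u,v\in(\Zbb/p^r\Zbb)^2} r_A(u)\, r_B(v)\,\nu(u,v), \]
where $r_A(u)=\#\{(a_1,a_2)\in A^2:a_1-a_2=u\}$, $r_B$ is analogous, and $\nu(u,v)=\#\{g\in SO_2(\Zbb/p^r\Zbb):gu=v\}$. Orthogonality forces $\|u\|^2=\|v\|^2$, and since $p\equiv 3\pmod 4$ makes the form $x^2+y^2$ anisotropic, this in turn forces $v_p(u)=v_p(v)$. I would stratify by this common valuation $j\in\{0,\ldots,r-1\}$: on the generic stratum $j=0$ the stabilizer of any primitive vector is trivial, so $\nu(u,v)\in\{0,1\}$, while on higher strata a short orbit/stabilizer count inside $SO_2(\Zbb/p^{r-j}\Zbb)$ bounds $\nu(u,v)$ by $p^{2j}$ on its support. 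The diagonal contribution $u=v=0$ equals exactly $|SO_2|\cdot|A|\cdot|B|$, which is subsumed by the target bound once $|A||B|\ge p^{2r}$---a consequence of $\delta_A^{1/2}\delta_B\ge 2p^{-1}$ after a short computation.

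\emph{Step 3 (Circle restriction estimate).} The main analytic input is an $L^2$ restriction/extension estimate for circles in $(\Zbb/p^{r-j}\Zbb)^2$: for each unit $t$, letting $C_t=\{x:\|x\|^2=t\}$, the Gauss/Salié-sum bound $|\widehat{\mathbf{1}_{C_t}}(\xi)|\ll p^{(r-j)/2}$ for $\xi\ne 0$ (which holds because $p\equiv 3\pmod 4$ makes $C_t$ non-degenerate) combined with Plancherel yields
\[ \sum_{u\in C_t}r_A(u)\ \ll\ \frac{|A|^2\,|C_t|}{p^{2(r-j)}} + p^{(r-j)/2}\,|A|, \]
and symmetrically for $B$. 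Multiplying these bounds, summing over $t\in(\Zbb/p^{r-j}\Zbb)^\times$, and absorbing the cross terms via Cauchy--Schwarz controls the contribution of stratum $j$ to $\sum_g E(g)$. The $j=0$ stratum supplies the main term of size $|SO_2|\cdot|A|^2|B|^2/p^{2r}$.

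\emph{Main obstacle.} The principal technical difficulty lies in the higher strata $j\ge 1$, where the circles $C_t$ are effectively ``thickened'' inside $(\Zbb/p^r\Zbb)^2$, both $\nu$ and the Fourier decay are weaker, and the naive estimate can overwhelm the main term. Summing the stratum contributions as a geometric series in $j$ while keeping the aggregated error below the main term is the delicate part of the argument; this is precisely the balancing that forces the exponent $\delta_A^{1/2}\delta_B\gtrsim p^{-1}$ in the hypothesis and accounts for its sharpness in the unbalanced regime illustrated by Example~\ref{ex1.3}.
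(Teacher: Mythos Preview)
Your energy-plus-Markov framework in Steps~1--2 is a legitimate alternative to the paper's route (the paper instead defines the ``bad'' set $\widetilde G=\{g:|B-gA|\le p^{2r}/2\}$, pairs it with translates to form $\widetilde R\subset\mathcal R_r$ with $\mathcal I(B\times A,\widetilde R)=0$, and then invokes the incidence bound of Theorem~\ref{thm:9.4}).  Both routes collapse to the same Fourier quantity
\[
\sum_{\bfm\ne\bfo}\ p^{v_\bfm}\,|\widehat A(\bfm)|^2\sum_{\bfm'\in V_\bfm}|\widehat B(\bfm')|^2,
\]
so either framework succeeds \emph{provided} this sum is controlled sharply enough.

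The genuine gap is in Step~3.  The input you name---the pointwise Gauss/Sali\'e decay $|\widehat{\mathbf 1_{C_t}}(\xi)|\ll p^{(r-j)/2}$---is only an $L^\infty$ bound on the circle's Fourier coefficients, and combined with Plancherel it yields at best the $L^2\to L^2$ restriction estimate $\sum_{\bfm'\in V_\bfm}|\widehat B(\bfm')|^2\le p^{-2r}|B|$.  Running your scheme with that bound produces the \emph{symmetric} threshold $\delta_A\delta_B\gg p^{-1}$ (this is exactly the content of Theorem~\ref{thm:9.10}), not the asymmetric $\delta_A^{1/2}\delta_B\gg p^{-1}$ claimed in Theorem~\ref{thm:9.9}.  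The half-exponent on $\delta_A$ comes from the strictly stronger $L^2\to L^4$ extension estimate for circles (Theorem~\ref{restriction3}), equivalently its dual $L^{4/3}\to L^2$ restriction (Lemma~\ref{lem_Ehat_Vm}), which for $p\equiv 3\pmod 4$ gives
\[
\sum_{\bfm'\in V_\bfm}|\widehat B(\bfm')|^2\ \ll\ p^{-\frac{5r+v_\bfm+1}{2}}\,|B|^{3/2}.
\]
This $|B|^{3/2}$ (rather than $|B|$) is precisely what, after the maximisation over $v_\bfm\le r-1$ in Corollary~\ref{key-2-d}, yields the bound $p^{-4r-1}|A||B|^{3/2}$ and hence the condition $|A|^{1/2}|B|\gg p^{3r-1}$.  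Your displayed bound $\sum_{u\in C_t}r_A(u)\ll |A|^2|C_t|p^{-2(r-j)}+p^{(r-j)/2}|A|$, no matter how you ``multiply and absorb cross terms'', cannot manufacture the $3/2$-power; it is the $L^4$ additive-energy bound on the circle that does so, and this is where $p\equiv 3\pmod 4$ is genuinely used (cf.\ Remark~\ref{rm25}).

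Two smaller points.  In Step~2 the stabiliser of a vector with $v_\bfx=j$ has order $p^{j}$ (Lemma~\ref{lem_staborb}), not $p^{2j}$.  And the uniform bound $|\widehat{\mathbf 1_{C_t}}(\xi)|\ll p^{(r-j)/2}$ for \emph{all} $\xi\ne 0$ does not hold in $(\Zbb/p^r\Zbb)^2$ when $r>1$: the decay depends on $v_\xi$, which is part of why the $p$-adic restriction theory (Theorems~\ref{restriction3}--\ref{restriction6}) has to be developed separately rather than quoted from the prime-field case.
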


Based on Example \ref{ex1.3}, the condition $\delta_A^{1/2}\cdot\delta_B\ge 2p^{-1}$ is sharp up to constant factors. Indeed, if we take $m=1$ and $n=\gamma (p)p^2$ with $\gamma(p)$ tending to $0$ arbitrarily slowly as $p$ tends to infinity, then $\delta_A^{1/2}\delta_B=\gamma(p)p^{-1}$ and $|gA-B|\leq \gamma(p)p^{2r}$. Example \ref{ex1.3} is most interesting when $r>1$ since it shows the sharpness of the exponent $1/2$ on the density condition of $A$. 


To prove this result, we make use of tools from discrete Fourier analysis and results in restriction/extension theory. As in the finite field setting \cite{P.Y.2023}, the key estimate in our proof is the following sum
 \[\sum_{\bfm \ne \bfo}\sum_{
    \bfm' \in V_\bfm}\lv \widehat{A}(\bfm)\rv^2 \lv \widehat{B}(\bfm ')\rv^2,\]
    where
    $V_{\mathbf{m}}:=\{g \mathbf{m}\colon g\in SO_2(\mathbb{Z}/p^r\mathbb{Z})\}$. There are several approaches one can use to bound this sum:
    
\begin{enumerate}
\item Using the Plancherel formula is a standard argument and often yields a weak bound.
\item A more advanced approach employs restriction/extension estimates associated with circles. While such estimates are well understood in the finite field setting, see \cite{CEHIK10, KS}, for example, they have only recently been developed in the $p$-adic setting \cite{P.X}. This method will be used to prove Theorem \ref{thm:9.9}.
\item When $r=1$, a novel method, combining the $L^2$ norm of the distance function from \cite{mu2} with a double-counting argument, is introduced in \cite{P.Y.2023}.
Extending this approach to the $p$-adic setting, however, presents challenges due to the limited understanding of incidence structures, particularly those involving points and planes in $(\mathbb{Z}/p^r\mathbb{Z})^3$. We hope to address this issue in the near future.
\end{enumerate}

Although Theorem \ref{thm:9.9} is not as strong as Pham and Yoo's result in \cite{P.Y.2023}, it establishes that if $\delta_A^{1/2}\cdot \delta_B \ge 2p^{-1}$, then $|gA-B|\gg p^2$ for a positive proportion of $g\in SO_2(\mathbb{Z}/p\mathbb{Z})$. This aligns directly with Mattila’s theorem in \cite{Mattila} with $d=2$, as mentioned above.

Notice that when $p\equiv 1\pmod 4$, the restriction/extension estimates are weaker, so the proof of Theorem \ref{thm:9.9} implies the condition of $\delta_A^{1/2}\cdot \delta_B\ge 2p^{-1/2}$. This is worse than the next theorem, which will be proved by using the Plancherel formula directly.

\begin{theorem}\label{thm:9.10}
    Let $p$ be a prime with  $p\equiv 1 \pmod 4$, and $r$ be a positive integer. Let $A,B \subset (\Zbb /p^r \Zbb)^2$ be such that $\delta_A\cdot \delta_B \ge 2p^{-1}$. Then for a positive proportion of $g\in SO_2(\Zbb /p^r \Zbb)$, we have
    \[ \lv gA -B\rv \gg p^{2r}. \] 
\end{theorem}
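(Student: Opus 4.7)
The plan is to follow a Cauchy--Schwarz plus Plancherel averaging scheme, the ``Plancherel formula directly'' approach alluded to in the remark before the theorem, so no restriction/extension input is needed. For each $g \in SO_2(\Zbb/p^r\Zbb)$, let $N_g(x) = \#\{(a,b)\in A\times B : ga - b = x\}$. Then $N_g = \mathbf{1}_{gA}\ast \mathbf{1}_{-B}$, $\sum_x N_g(x) = |A||B|$, and $N_g$ is supported on $gA - B$. Cauchy--Schwarz gives $|A|^2|B|^2 \leq |gA - B|\cdot \sum_x N_g(x)^2$, so it suffices to exhibit a positive proportion of $g$ for which $\sum_x N_g(x)^2 \ll |A|^2|B|^2/p^{2r}$.

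By Plancherel and the identity $\widehat{\mathbf{1}_{gA}}(\bfm) = \widehat{A}(g^{-1}\bfm)$ for $g\in SO_2$,
\[ \sum_x N_g(x)^2 = \frac{1}{p^{2r}}\sum_\bfm|\widehat{A}(g^{-1}\bfm)|^2|\widehat{B}(\bfm)|^2. \]
Averaging over $g \in SO_2(\Zbb/p^r\Zbb)$ and using the orbit counting identity $\sum_g|\widehat{A}(g^{-1}\bfm)|^2 = (|SO_2|/|V_\bfm|)\sum_{\bfm'\in V_\bfm}|\widehat{A}(\bfm')|^2$, one gets
\[ \frac{1}{|SO_2|}\sum_g\sum_x N_g(x)^2 = \frac{|A|^2|B|^2}{p^{2r}} + \frac{1}{p^{2r}}\sum_{\bfm\ne\bfo} \frac{|\widehat{B}(\bfm)|^2}{|V_\bfm|}\sum_{\bfm'\in V_\bfm}|\widehat{A}(\bfm')|^2. \]

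The key input, replacing the restriction estimate used for Theorem \ref{thm:9.9}, is the uniform lower bound $|V_\bfm|\geq p-1$ for every $\bfm\ne\bfo$. Since $p\equiv 1\pmod 4$, $-1$ is a square in $\Zbb/p^r\Zbb$, and the change of variables $(x,y)\mapsto(x+iy,\,x-iy) = (u,v)$ diagonalizes the action, identifying $SO_2(\Zbb/p^r\Zbb)$ with $(\Zbb/p^r\Zbb)^\times$ acting on $(\Zbb/p^r\Zbb)^2$ by $t\cdot(u,v) = (tu,\,t^{-1}v)$. Writing the $p$-adic valuations of $u$ and $v$ as $j$ and $k$ (with the convention that a zero coordinate imposes no constraint), the stabilizer of a nonzero $(u,v)$ has size $p^{\min(j,k)}$, so its orbit has size $p^{r-1-\min(j,k)}(p-1)\geq p-1$. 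Combining this with Plancherel ($\sum_\bfm|\widehat{A}(\bfm)|^2 = p^{2r}|A|$ and similarly for $B$) bounds the error term above by $p^{2r}|A||B|/(p-1)$.

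The hypothesis $\delta_A\delta_B \geq 2p^{-1}$ is equivalent to $|A||B|\geq 2p^{4r-1}$, which gives $|A||B|(p-1)\geq p^{4r}$ for $p\geq 2$; hence the main term dominates the error and the average $\frac{1}{|SO_2|}\sum_g\sum_x N_g(x)^2$ is at most $2|A|^2|B|^2/p^{2r}$. Markov's inequality then shows that at least half of $g\in SO_2$ satisfy $\sum_x N_g(x)^2\leq 4|A|^2|B|^2/p^{2r}$, which gives $|gA-B|\geq p^{2r}/4 \gg p^{2r}$. The only mildly delicate step is the orbit lower bound in the third paragraph; everything else is a routine Fourier plus first-moment computation.
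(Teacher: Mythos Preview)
Your proof is correct and amounts to the same Plancherel-plus-orbit-size argument as the paper's: both hinge on the bound $|V_\bfm|\ge p-1$ for $\bfm\ne\bfo$ (equivalently $|\stab_r(\bfm)|\le p^{r-1}$, which is exactly the input used in the proof of Theorem~\ref{thm:9.5}) to control $\sum_{g}\sum_{\bfm\ne\bfo}|\widehat A(g\bfm)|^2|\widehat B(\bfm)|^2$. The only difference is packaging---the paper routes this through the incidence estimate of Theorem~\ref{thm:9.5} and a bad-set argument on $\widetilde G$, while you take the second moment of $N_g$ and apply Markov's inequality directly; the substantive Fourier content is identical.
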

In Example \ref{ex1.3}, by taking $A=B=X+Y$, where $Y\subset \{(x_1, x_2)\in (\mathbb{Z}/{p}\mathbb{Z})\times (\mathbb{Z}/{p}\mathbb{Z})\colon x_1^2+x_2^2\equiv 1\pmod p \}$, then, for any $g\in SO_2(\mathbb{Z}/p^r\mathbb{Z})$, we have $|A|=|B|=|Y|\cdot p^{2r-2}$ and $|gA-B|\ll |Y|^2\cdot p^{2r-2}$. Therefore, it is reasonable to make the following conjecture on the balanced case. 
\begin{conjecture}
    Let $p$ be an odd prime, and $r$ be a positive integer. Let $A,B \subset (\Zbb /p^r \Zbb)^2$ be such that $\delta_A=\delta_B\gg p^{-1}$. Then for a positive proportion of $g \in SO_2(\Zbb /p^r \Zbb)$, we have
    \[ \lv gA -B\rv \gg p^{2r}. \] 
\end{conjecture}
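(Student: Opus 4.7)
The plan is to follow the Fourier-analytic framework used in Theorem \ref{thm:9.9}, whose essential step reduces the lower bound on $|gA-B|$ (averaged over $g\in SO_2(\Zbb/p^r\Zbb)$) to controlling the orbit-sum
\[ T(A,B) \assign \sum_{\bfm \ne \bfo}\sum_{\bfm' \in V_\bfm}\lv \widehat{A}(\bfm)\rv^2 \lv \widehat{B}(\bfm')\rv^2. \]
At the conjectured balanced threshold $\delta_A=\delta_B\gg p^{-1}$, neither Plancherel nor the current circular restriction/extension bounds over $(\Zbb/p^r\Zbb)^2$ are strong enough; Theorems \ref{thm:9.9} and \ref{thm:9.10} only deliver the weaker balanced thresholds $\delta\gg p^{-2/3}$ and $\delta\gg p^{-1/2}$ respectively. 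I would therefore first dyadically decompose $\widehat{A}$ and $\widehat{B}$ by magnitude, and on each dyadic level replace the orbit-averaged restriction input by a finer estimate counting pairs $(\bfm,\bfm')$ lying in a common $SO_2$-orbit with prescribed Fourier weight --- a combinatorial object amenable to incidence-theoretic analysis.

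The second step would adapt the double-counting argument of Pham--Yoo \cite{P.Y.2023}, which in the finite-field setting encodes the relevant Fourier information through the $L^2$ norm of the distance function and reduces matters to a point--plane incidence count in $\mathbb{F}_p^3$. In the p-adic setting, for $A\subset (\Zbb/p^r\Zbb)^2$ with quadratic form $Q(\bfx)=x_1^2+x_2^2$, I would study the level-set count
\[ \nu(t)\assign \#\{(\bfx,\bfy)\in A\times A\colon Q(\bfx-\bfy)\equiv t\pmod{p^r}\}, \]
seek an $\ell^2$ bound on $\nu$, and relate this through a standard duality to an incidence problem between a set $\cP$ of points and a set $\Pi$ of planes in $(\Zbb/p^r\Zbb)^3$. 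The target is a Szemer\'edi--Trotter-type bound of the shape
\[ I(\cP,\Pi)\ll |\cP|^{3/4}|\Pi|^{3/4} p^{3r/4} + |\cP| + |\Pi|, \]
or some variant adapted to the quadratic structure, that is just strong enough to close the Fourier argument at the $p^{-1}$ threshold.

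The hard part, as the authors themselves flag in point~(3) of the introduction, is precisely this incidence step. The Kloosterman/spectral method that yields the $\mathbb{F}_p$-bound does not transfer cleanly because $\Zbb/p^r\Zbb$ has many nontrivial ideals, and pairs of points or planes that coincide modulo $p^k$ for some $k<r$ produce degenerate strata which can dominate the total count. To circumvent this I would stratify $\cP$ and $\Pi$ by the $p$-adic valuation of their defining data, handle the ``regular'' stratum (nondegenerate modulo $p$) by Hensel-lifting a clean $\mathbb{F}_p$-incidence bound, and treat the singular strata by induction on $r$, reducing them to weighted incidence problems on $(\Zbb/p^{r-k}\Zbb)^3$. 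Securing such a stratified bound --- likely requiring new sum-product input over $\Zbb/p^r\Zbb$ in the spirit of Bourgain--Glibichuk--Konyagin --- is the dominant obstacle; once in hand, packaging it back into the Fourier estimate should be essentially routine.
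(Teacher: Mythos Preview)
The statement you are addressing is a \emph{conjecture} in the paper, not a theorem: the authors do not prove it and explicitly remark that it ``appears highly challenging and could be as difficult as the Erd\H{o}s--Falconer distance problem.'' There is therefore no proof in the paper against which to compare.

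Your submission is not a proof either; it is a research outline. You correctly identify the reduction to the orbit sum $T(A,B)$ and correctly observe that neither Plancherel (Theorem \ref{thm:9.10}) nor the restriction bound (Theorem \ref{thm:9.9}) reaches the balanced threshold $\delta\gg p^{-1}$. You also correctly locate the obstruction exactly where the paper does --- the point--plane incidence problem in $(\Zbb/p^r\Zbb)^3$ flagged in item (3) of the introduction --- and you propose a plausible stratify-and-lift strategy. But the entire weight of the argument rests on the hypothetical incidence bound
\[ I(\cP,\Pi)\ll |\cP|^{3/4}|\Pi|^{3/4} p^{3r/4} + |\cP| + |\Pi|, \]
which you do not establish and which is not known. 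Your own closing sentence concedes this: ``Securing such a stratified bound \ldots\ is the dominant obstacle.'' That is precisely the gap; until it is filled, nothing has been proved. The dyadic decomposition and the Hensel-lifting-plus-induction scheme are reasonable heuristics, but the singular strata can carry the full mass of the incidence count (as Example \ref{ex1.3} already hints), and there is no indication that induction on $r$ closes without loss.
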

This conjecture appears highly challenging and could be as difficult as the Erd\H{o}s-Falconer distance problem. Regarding further open questions, a natural direction is to explore similar problems in other group settings or in higher dimensions. To generalize Theorem \ref{thm:9.9} to higher dimensions, the first step is to extend the results in Section 2, but we found these extensions too complicated in this setting. This suggests exploring alternative approaches to address the problem.

{\bf Notations:} In this paper, by $X\gg Y$ we mean $X\ge CY$ for some absolute positive constant $C$, and $X\sim Y$ means $X\gg Y$ and $Y\gg X$. 


\section{Preliminaries}

Throughout this paper, the letter $p$ always denotes an odd prime, and $r$ a positive integer. 

Let
\[
G_r:=SO_2(\bZ/p^r\bZ)=\left\{\begin{bmatrix}
a &-b\\
b & a
\end{bmatrix}\,(\mod p^r):\,\, a^2+b^2\equiv 1\,(\mod p^r)\right\}.
\]
For $\bfx=(x_1,x_2)\in (\bZ/p^r\bZ)^2$, the distance function is denoted by $\|\bfx\|:=x_1^2+x_2^2\, (\mod p^r)$. Then $\|g\bfx\|\equiv \|\bfx\|\, (\mod p^r)$ for any $g\in G_r$ and $\bfx\in (\bZ/p^r\bZ)^2$.  Write the orbit of $\bfx$ by
\[
\textsf{orb}_r(\bfx) = \big\{\theta \bfx:\, \theta \in G_r\big\},
\]
and the stabilizer of $\bfx$ by
\[
\textsf{stab}_r(\bfx) = \{\theta\in G_r:\, \theta \bfx \equiv \bfx\, (\mod p^r)\}.
\]
For any $j\in \bZ/p^r\bZ$, let $C_{j,r}$ be the circle centered at the origin of radius $j$, i.e.
\[
C_{j,r}=\{\bfx\in (\bZ/p^r\bZ)^2:\, \|\bfx\|\equiv j\,(\mod p^r)\}.
\]

For $x\in \bZ/p^r\bZ$ and $u\in \{0,1,\ldots,r-1\}$, the expression $v_p(x)=u$ means that $x\equiv 0\,(\mod p^u)$ and $x\not\equiv 0\,(\mod p^{u+1})$. We also denote $v_p(0) = r$ for $0\in \bZ/p^r\bZ$. When $\bfx=(x_1,x_2)$, we write $v_\bfx=\min\{v_p(x_1),\, v_p(x_2)\}$. Then the vector $\bfx$ can be expressed as $\bfx=p^{v_\bfx}\tilde{\bfx}$, where $\tilde{\bfx}$ is a vector in $(\bZ/p^{r-v_\bfz}\bZ)^n$ such that $v_{\tilde{\bfx}}=0$.

We recall the following facts from \cite{P.X}.
\begin{lemma}[\cite{P.X}, Lemma 4.5] \label{lem_staborb}
Let $p\equiv 3 \, (\mod 4)$. For any $\bfo\neq \bfx\in (\bZ/p^r\bZ)^2$, let $\bfx=p^{v_\bfx}\tilde{\bfx}$ for some $\tilde{\bfx}\in (\bZ/p^{r-v_\bfx}\bZ)^2$ with $v_{\tilde{\bfx}}=0$. Then $\textsf{orb}_r(\bfx)  = p^{v_\bfx}C_{\|\tilde{\bfx}\|,r-v_\bfx}$, 
and
\[
|\textsf{stab}_r(\bfx)| = p^{v_\bfx},\qquad |\textsf{orb}_r(\bfx)| =  p^{r-v_\bfx}\l(1+\frac{1}{p} \r).
\]
\end{lemma}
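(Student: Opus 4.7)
The plan is to identify $(\bZ/p^s\bZ)^2$ with the ring $R_s := (\bZ/p^s\bZ)[i] = (\bZ/p^s\bZ)[X]/(X^2+1)$ for $s\in\{r,\,r-v_\bfx\}$ and to reduce every assertion to facts about $R_s$ and its norm map. Because $p\equiv 3\pmod 4$, the polynomial $X^2+1$ is irreducible mod $p$, so $R_s$ is a local ring with maximal ideal $(p)$ and residue field $\bF_{p^2}$. Under $(a,b)\leftrightarrow a+bi$, multiplication by $g=a+bi\in R_s$ realises the rotation $\begin{pmatrix}a&-b\\b&a\end{pmatrix}$, and $G_s$ is identified with the norm-one subgroup $\ker\bigl(N\colon R_s^\times\to (\bZ/p^s\bZ)^\times\bigr)$, where $N(a+bi)=a^2+b^2=\|(a,b)\|$.

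First I would compute $|G_s|$. The norm on the residue-field extension $\bF_{p^2}^\times\twoheadrightarrow\bF_p^\times$ has kernel of size $p+1$, and a standard Hensel lift shows $N$ is surjective at every level $s$. Hence $|G_s|=|R_s^\times|/|(\bZ/p^s\bZ)^\times|=p^{2s-2}(p^2-1)/\bigl(p^{s-1}(p-1)\bigr)=(p+1)p^{s-1}$.

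Next I turn to the stabilizer. Because $v_{\tilde\bfx}=0$ and $p\equiv 3\pmod 4$, the quantity $\|\tilde\bfx\|=\tilde x_1^2+\tilde x_2^2$ is a unit mod $p$ (if it vanished mod $p$, the ratio of coordinates would exhibit $-1$ as a square), so $\tilde\bfx$ is a unit in $R_{r-v_\bfx}$ and any lift of it to $R_r$ is a unit. The equation $g\bfx\equiv\bfx\pmod{p^r}$ becomes $(g-1)p^{v_\bfx}\tilde\bfx\equiv 0\pmod{p^r}$ in $R_r$; cancelling the unit $\tilde\bfx$ gives the equivalent condition $g\equiv 1\pmod{p^{r-v_\bfx}}$ in $R_r$, i.e.\ $g\in\ker(\pi)$ where $\pi\colon G_r\to G_{r-v_\bfx}$ is reduction. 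Granting surjectivity of $\pi$, the previous step yields $|\textsf{stab}_r(\bfx)|=|G_r|/|G_{r-v_\bfx}|=p^{v_\bfx}$, and orbit--stabilizer gives $|\textsf{orb}_r(\bfx)|=p^{r-v_\bfx}(1+1/p)$. The surjectivity of $\pi$ is the main obstacle of the proof: I would establish it by Hensel-lifting a norm-one element from $R_{r-v_\bfx}^\times$ to $R_r^\times$, or more symmetrically by the snake lemma applied to the short exact sequences $1\to G_s\to R_s^\times\xrightarrow{N}(\bZ/p^s\bZ)^\times\to 1$ at $s=r$ and $s=r-v_\bfx$.

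Finally, to identify $\textsf{orb}_r(\bfx)$ as a set, the identity $g(p^{v_\bfx}\tilde\bfx)=p^{v_\bfx}(g\tilde\bfx)$ combined with $\|g\tilde\bfx\|\equiv\|\tilde\bfx\|\pmod{p^{r-v_\bfx}}$ gives the inclusion $\textsf{orb}_r(\bfx)\subseteq p^{v_\bfx}C_{\|\tilde\bfx\|,\,r-v_\bfx}$. Equality follows by matching cardinalities, using $|C_{j,s}|=(p+1)p^{s-1}$ for any unit $j\in(\bZ/p^s\bZ)^\times$: the $p+1$ solutions mod $p$ come from the norm surjectivity $\bF_{p^2}^\times\twoheadrightarrow\bF_p^\times$, and each lifts uniquely to a solution mod $p^s$ by Hensel applied to $x_1^2+x_2^2-j$, whose gradient $(2x_1,2x_2)$ is non-vanishing at every solution (since $j$ a unit forces $(x_1,x_2)\not\equiv\bfo\pmod p$). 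Thus $|p^{v_\bfx}C_{\|\tilde\bfx\|,\,r-v_\bfx}|=(p+1)p^{r-v_\bfx-1}=|\textsf{orb}_r(\bfx)|$, so the two sides coincide, completing the proof.
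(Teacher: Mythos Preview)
Your argument is correct and takes a genuinely different, more algebraic route than the paper. The paper's own approach (given in the appendix for the companion $p\equiv 1\pmod 4$ lemma, which is the template for this case) proceeds by brute matrix manipulation: it writes $\theta\bfx\equiv\bfx$ as a $2\times 2$ linear system in the rotation parameters $(a,b)$, solves it modulo $p^{r-v_\bfx}$ to obtain $(a,b)\equiv(1,0)$, and then invokes a stated multivariate Hensel lemma to count the $p^{v_\bfx}$ lifts; the orbit identity is obtained by reducing $\theta$ modulo $p^{r-v_\bfx}$ and matching the size with $|G_{r-v_\bfx}|$. Your identification of $(\bZ/p^s\bZ)^2$ with the local ring $R_s=(\bZ/p^s\bZ)[i]$ and of $G_s$ with the norm-one subgroup turns the stabilizer computation into the single-line observation $\textsf{stab}_r(\bfx)=\ker(G_r\to G_{r-v_\bfx})$, exploits the fact that $\tilde\bfx$ is a \emph{unit} (this is exactly where $p\equiv 3\pmod 4$ enters), and recovers the circle description by a cardinality match. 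What your approach buys is conceptual clarity and a direct explanation of why the orbit is a full circle, which the paper's coordinate method does not make transparent; what the paper's approach buys is uniformity across residues, since it does not rely on $R_s$ being local.

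One small slip to fix: in your last paragraph you say each of the $p+1$ solutions of $x_1^2+x_2^2\equiv j\pmod p$ ``lifts uniquely'' to a solution mod $p^s$. With one equation in two unknowns and Jacobian rank $1$, Hensel gives $p^{s-1}$ lifts of each, not one --- this is exactly what produces your (correctly stated) count $|C_{j,s}|=(p+1)p^{s-1}$, so the error is in the wording, not the arithmetic.
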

\begin{lemma}[\cite{P.X}, Lemma 4.9]  \label{lem_staborb1}
Let $p\equiv 1 \, (\mod 4)$.  For any $\bfo\neq \bfx\in (\bZ/p^r\bZ)^2$, let $\bfx=p^{v_\bfx}\tilde{\bfx}$ for some $\tilde{\bfx}\in (\bZ/p^{r-v_\bfx}\bZ)^2$ with $v_{\tilde{\bfx}}=0$. Then $\textsf{orb}_r(\bfx) =  p^{v_\bfx}\textsf{orb}_{r-v_\bfx}(\tilde{\bfx})$, and 
\[
|\textsf{stab}_r({\bfx})| = p^{v_\bfx},\qquad |\textsf{orb}_r(\bfx)| =  p^{r-v_\bfx}\l( 1-\frac{1}{p}\r).
\]
\end{lemma}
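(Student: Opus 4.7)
The plan is to exploit the factorization $x_1^2 + x_2^2 = (x_1 + i x_2)(x_1 - i x_2)$ available in $\bZ/p^r\bZ$ when $p \equiv 1 \pmod 4$: by Hensel's lemma a square root $i$ of $-1$ lifts from $\bF_p$ up to $\bZ/p^r\bZ$. I would first change coordinates via $w_1 := x_1 + i x_2$ and $w_2 := x_1 - i x_2$; the change-of-basis has determinant $-2i \in (\bZ/p^r\bZ)^*$, so it is a bijection on $(\bZ/p^r\bZ)^2$, and since both it and its inverse have unit coefficients one checks $v_\bfx = \min\{v_p(w_1),\, v_p(w_2)\}$.

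A direct calculation then shows that $g = \begin{bmatrix} a & -b \\ b & a \end{bmatrix} \in G_r$ acts in the new coordinates as the diagonal scaling $(w_1, w_2) \mapsto (u w_1,\, u^{-1} w_2)$ with $u := a + i b$: the identity $(a+ib)(x_1 + i x_2) = (ax_1 - bx_2) + i(bx_1 + ax_2)$ takes care of $w_1$, and the conjugate identity handles $w_2$. Because $a^2 + b^2 \equiv 1$ reads $u(a - ib) \equiv 1$, the assignment $g \mapsto u$ is a group isomorphism from $G_r$ onto $(\bZ/p^r\bZ)^*$; in particular $|G_r| = p^{r-1}(p-1)$.

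The stabilizer and orbit counts are now a short exercise. The condition $g\bfx \equiv \bfx \pmod{p^r}$ amounts to the pair $(u-1)w_1 \equiv 0$ and $(u-1)w_2 \equiv 0 \pmod{p^r}$ (the second is obtained from $(u^{-1}-1)w_2 \equiv 0$ after multiplying by $-u$), and both hold iff $u \equiv 1 \pmod{p^{r - v_\bfx}}$; this yields $|\stab_r(\bfx)| = p^{v_\bfx}$, and orbit--stabilizer gives $|\orb_r(\bfx)| = p^{r - v_\bfx}(1 - 1/p)$. For the set identity $\orb_r(\bfx) = p^{v_\bfx}\,\orb_{r-v_\bfx}(\tilde{\bfx})$, I would factor the action as $g\bfx = p^{v_\bfx}(\bar g\,\tilde{\bfx})$, where $\bar g \in G_{r-v_\bfx}$ denotes the mod-$p^{r-v_\bfx}$ reduction of $g$; the reduction map $G_r \to G_{r-v_\bfx}$ is surjective because under the isomorphism above it corresponds to the surjection $(\bZ/p^r\bZ)^* \twoheadrightarrow (\bZ/p^{r-v_\bfx}\bZ)^*$.

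The essential step is the diagonalization: once the $SO_2$-action is converted into scalar multiplication $w \mapsto u w$, every count reduces to elementary facts about $(\bZ/p^r\bZ)^*$, and I do not foresee any serious obstacle. The argument relies crucially on $-1$ being a square, which is exactly what fails for $p \equiv 3 \pmod 4$; in that regime one must instead work in the norm-one subgroup of the unramified quadratic extension, which accounts for the $(1 + 1/p)$ factor in the companion Lemma~\ref{lem_staborb}.
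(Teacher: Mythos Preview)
Your argument is correct and takes a genuinely different route from the paper's proof. The paper proceeds by brute force: it writes the stabilizer condition $\theta\bfx\equiv\bfx\pmod{p^r}$ as the linear system
\[
\begin{bmatrix}\tilde x_1 & -\tilde x_2\\ \tilde x_2 & \tilde x_1\end{bmatrix}\begin{bmatrix}a-1\\ b\end{bmatrix}\equiv\bfo\pmod{p^{r-v_\bfx}},
\]
splits into the two cases $\|\tilde\bfx\|\not\equiv 0$ and $\|\tilde\bfx\|\equiv 0\pmod p$, solves each to obtain $(a,b)\equiv(1,0)\pmod{p^{r-v_\bfx}}$, and then invokes the multivariate Hensel count (Lemma~\ref{lem_Hensel}) to lift. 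Your diagonalization via $w_{1,2}=x_1\pm ix_2$ and the isomorphism $G_r\cong(\bZ/p^r\bZ)^*$, $g\mapsto a+ib$, bypasses this case analysis entirely: the stabilizer becomes the transparent condition $u\equiv 1\pmod{p^{r-v_\bfx}}$, and the orbit identity follows from surjectivity of $(\bZ/p^r\bZ)^*\twoheadrightarrow(\bZ/p^{r-v_\bfx}\bZ)^*$. What your approach buys is a structural explanation for the factor $(1-1/p)$ and the absence of any dependence on whether $\|\tilde\bfx\|$ is a unit; what the paper's approach buys is that it runs in parallel with the $p\equiv 3\pmod 4$ case (Lemma~\ref{lem_staborb}), where no such diagonalization is available over $\bZ/p^r\bZ$ and one must work with the matrix equations directly.
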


For a function $f:\, (\Zbb /p^r \Zbb)^2 \rightarrow \Cbb$, the Fourier transform $\widehat{f}:\,(\Zbb /p^r \Zbb)^2 \rightarrow \Cbb$ is defined by
\[
\widehat{f}(\bfm):=p^{-2r}\sum_{\bfx\in (\Zbb /p^r \Zbb)^2} \chi_r (-\bfm\cdot \bfx)f(\bfx), 
\]
where $\chi_r (x) = e^{\frac{2\pi ix}{p^r}}$ $(x\, \, \mod p^r)$.

We have the following basic properties of $\chi$ and $\widehat{f}$.
\begin{itemize}
    \item The orthogonality property:
\[
\sum_{{\bf \alpha } \in (\Zbb /p^r\Zbb)^2}\chi_r({\bf \beta }\cdot {\bf \alpha }) =
\begin{cases}
    p^{2r} ,& \text{if}\quad {\bf \beta } \equiv \bfo \, (\mod p^r) ,\\
    0 ,& \text{if}\quad {\bf \beta } \not \equiv \bfo \,(\mod p^r). 
\end{cases}
\]
\item The Fourier inversion formula:
    \[ f(\bfx)= \sum_{\bfm\in (\Zbb /p^r\Zbb)^2} \widehat{f}(\bfm)\chi_r(\bfm\cdot \bfx). \]
\item The Plancherel formula:
    \[ \sum_{\bfm\in (\Zbb /p^r \Zbb)^2} \lv \widehat{f} (\bfm)\rv^2 =p^{-2r} \sum_{\bfx\in (\Zbb /p^r \Zbb)^2} \lv f(\bfx) \rv^2. \]
\end{itemize}



For a set $A$, by abuse of notation, we also denote its characteristic function by $A(x)$, i.e. $A(x)=1$ if $x\in A$ and $A(x)=0$ if $x\not\in A$.

Now we state a more general restriction problem,.  

Let $\bfm\in (\bZ/p^r\bZ)^2$. When $r$ is given, we also use the simplified notation $V_\bfm$ for $\orb_r(\bfm)$. Let $d\sigma_{V_\mathbf{m}}$ be the corresponding surface measure on $V_\mathbf{m}$ and for all $f: V_\bfm \rightarrow \Cbb$, define
\[ (fd\sigma_{V_\bfm} )^\vee (\bfy) := \frac{1}{\lv V_\bfm\rv} \sum_{\bfx \in V_\bfm} f(\bfx ) \chi_r(\bfy \cdot \bfx) .\]
As computed in \cite{P.X}, we have

\begin{align*}
&\sum_{\bfm\,(\mod p^r)}|(fd\sigma_{V_\bfm})^\vee(\bfm)|^4=\frac{p^{2r}}{|V_{\bfm}|^4}\sum_{\xi,\xi',\eta,\eta'\in V_\bfm \atop \xi-\eta\equiv \xi'-\eta'\,(\mod p^r)}f(\xi)f(\xi')\overline{f(\eta)f(\eta')}.
\end{align*}
If $f$ is the characteristic function of a set $A\subset V_\bfm$, then $|V_{\bfm}|^4p^{-2r}\sum_{\bfm\,(\mod p^r)}|(fd\sigma_{V_\bfm})^\vee(\bfm)|^4$ counts the number of energy quadruples in $A$.

The following theorems 
give upper bounds of that sum for general functions $f$. 

\begin{theorem}[\cite{P.X}, Theorem 4.1]\label{restriction3}
Let $p\equiv 3\, (\mod 4)$ be a prime and  $r \ge 1$ be an integer. Let $\bfm\in (\bZ/p^r\bZ)^2$ be such that $\bfm\neq \bfo$. Then
         \[\left(\sum_{\bfx\in (\mathbb{Z}/p^r\mathbb{Z})^2}|(fd\sigma_{V_\mathbf{m}})^\vee(\bfx)|^4\right)^{\frac{1}{2}}\ll  p^{-\frac{r-3v_\bfm+1}{2}}\sum_{\bfx\in V_\mathbf{m}}|f(\bfx)|^2.\]
\end{theorem}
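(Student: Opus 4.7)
The plan is to bound the $L^4$ restriction norm by reducing it to an additive energy of the orbit $V_\bfm$, and then to control that additive energy geometrically via the number of points in $V_\bfm\cap(V_\bfm+t)$.

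Setting $g(t):=\sum_{\xi,\eta\in V_\bfm,\ \xi-\eta\equiv t}f(\xi)\overline{f(\eta)}$, the identity displayed immediately before the theorem rewrites as
\[\sum_{\bfx}\lv(fd\sigma_{V_\bfm})^\vee(\bfx)\rv^4=\frac{p^{2r}}{\lv V_\bfm\rv^4}\sum_t\lv g(t)\rv^2.\]
A Cauchy--Schwarz step in the inner sum defining $g(t)$ gives $\lv g(t)\rv^2\le n(t)\sum_{\xi-\eta=t}\lv f(\xi)\rv^2\lv f(\eta)\rv^2$, where $n(t):=\lv V_\bfm\cap(V_\bfm+t)\rv$, and summing in $t$ yields $\sum_t\lv g(t)\rv^2\le(\max_t n(t))\cdot(\sum_\bfx\lv f(\bfx)\rv^2)^2$. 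Since $\lv V_\bfm\rv\sim p^{r-v_\bfm}$ by Lemma~\ref{lem_staborb}, the inequality to be proved is equivalent to the uniform bound $\max_{t\neq\bfo}n(t)\ll p^{r-v_\bfm-1}$.

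To prove this I peel off the valuation: by Lemma~\ref{lem_staborb}, $V_\bfm=p^{v_\bfm}S$ where $S\subset(\Z/p^{r'}\Z)^2$ is the circle of radius $\|\tilde\bfm\|$ with $r':=r-v_\bfm$, so $n(p^{v_\bfm}t')=\lv S\cap(S+t')\rv$ and it suffices to show $\lv S\cap(S+t')\rv\ll p^{r'-1}$ for every $t'\not\equiv\bfo\pmod{p^{r'}}$. A point $\bfy\in S\cap(S+t')$ satisfies $\|\bfy\|\equiv\|\tilde\bfm\|\pmod{p^{r'}}$ and, by subtracting the two norm equations, the line equation $2\bfy\cdot t'\equiv\|t'\|\pmod{p^{r'}}$. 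Let $u:=v_{t'}<r'$. When $u=0$, solving the line for one coordinate and substituting into the circle produces a quadratic in the other coordinate whose leading coefficient is $\|t'\|$; the hypothesis $p\equiv 3\pmod 4$ enters here by forcing $\|t'\|$ to be a unit, since it rules out $t'_1{}^2+t'_2{}^2\equiv 0\pmod p$ for primitive $t'$, and a Hensel-type analysis in terms of $v_p(4\|\tilde\bfm\|-\|t'\|)$ then bounds the solution count by $\ll p^{\lfloor r'/2\rfloor}\le p^{r'-1}$. When $u\ge 1$, writing $\bfy=\bar{\bfy}+p^{r'-u}\bfw$ with $\bar{\bfy}\in(\Z/p^{r'-u}\Z)^2$ and $\bfw\in(\Z/p^u\Z)^2$, the line equation reduces modulo $p^{r'-u}$ to a lower-level circle--line intersection, contributing $\ll p^{(r'-u)/2}$ admissible $\bar{\bfy}$, while for each $\bar{\bfy}$ Hensel's lemma provides at most $p^u$ valid lifts $\bfw$; combining the two and performing a short case check on the size of $u$ against $r'$ (with the complementary regime $u>r'/2$ giving at most $\ll p^u$ directly) yields $\lv S\cap(S+t')\rv\ll p^{r'-1}$ uniformly.

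The main obstacle is the case analysis in the geometric step: one must keep the quadratic-discriminant multiplicity and the Hensel-lift multiplicity simultaneously under control across the whole range of $u$, and verify the small-$r'$ and boundary regimes where they meet. The role of the congruence $p\equiv 3\pmod 4$ is pinned down to a single place --- ensuring that $\|t'\|$ is a unit whenever $v_{t'}=0$ --- and this is precisely the source of the factor-of-$p$ saving over the Plancherel bound, and therefore of the quantitative gap between Theorem~\ref{thm:9.9} and Theorem~\ref{thm:9.10}.
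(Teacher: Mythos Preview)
Your reduction of the $L^4$ norm to the additive-energy count and then, via Cauchy--Schwarz, to the bound $n(t)=|V_\bfm\cap(V_\bfm+t)|\ll p^{r-v_\bfm-1}$ for $t\neq\bfo$ is exactly the paper's argument (see the Appendix proof of Theorem~\ref{restriction6}, which the paper says is almost identical to the $p\equiv 3$ case); you should only make the separation of the term $t=\bfo$ explicit, since $n(\bfo)=|V_\bfm|$ exceeds your target. Where you diverge is in the proof of the geometric bound itself: the paper (Lemma~\ref{energy4}) parametrizes points of $V_\bfm$ by elements of $G_r$, converts $\bfx-\bfy=\bfz$ into a four-variable polynomial system in the rotation entries $(a,b,a',b')$, counts solutions modulo $p$, and lifts via Lemma~\ref{lem_Hensel} using that the Jacobian has rank $\ge 3$. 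Your circle--line approach is more elementary and avoids the group parametrization, at the price of the case split on $u=v_{t'}$; as a bonus it actually yields the sharper count $\ll p^{\lfloor r'/2\rfloor}$ in the $u=0$ regime. One tightening worth making: for $u\ge 1$ your discriminant at level $r'-u$ is (up to a unit) $4j-p^{2u}\|s\|\equiv 4j\pmod p$, which is a unit since $j=\|\tilde\bfm\|$ is, so there are at most two admissible $\bar\bfy$ rather than $\ll p^{(r'-u)/2}$; with that, the total is $\ll p^u\le p^{r'-1}$ immediately and the auxiliary case check on $u$ versus $r'/2$ becomes unnecessary.
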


\begin{theorem}[\cite{P.X}, Theorem 4.2]\label{restriction6}
Let $p\equiv 1 \, (\mod 4)$ be a prime and $r \ge 1$ be an integer. Let $\bfm\in (\bZ/p^r\bZ)^2$ be such that $\bfm\neq \bfo$.  Suppose that $\bfm=p^{v_m}\tilde{\bfm}$ with $\tilde{\bfm}\in (\bZ/p^{r-v_m}\bZ)^2$. 

If $\|\tilde{\bfm}\|\not\equiv 0\,(\mod p)$, then
\[ \left( \sum_{\bfx\in (\bZ /p^r\bZ)^2} \vert (fd\sigma_{V_\bfm})^{\vee}(\bfx) \vert^4\right)^{\frac{1}{2}} \ll p^{-\frac{ r-3v_\bfm+1}{2}}\sum_{\bfx\in V_\bfm} \vert f(\bfx)\vert^2 .\]
    

If $\Vert \tilde{\bfm} \Vert \equiv 0 \, (\mod p)$,we have 
        \[  \left( \sum_{\bfx\in (\bZ /p^r\bZ)^2} \vert (fd\sigma_{V_\bfm})^{\vee}(\bfx) \vert^4\right)^{\frac{1}{2}} \ll p^{-\frac{r-3v_\bfm}{2}}\sum_{\bfx\in V_\bfm} \vert f(\bfx)\vert^2 .\] 
\end{theorem}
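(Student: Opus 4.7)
The plan is to dualize the $L^4$ extension estimate to a pointwise multiplicity bound for the additive convolution of $V_\bfm$ with itself, and then to count this multiplicity via the quadric geometry of $V_\bfm$ combined with Hensel-type lifting from $\F_p$.

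\textbf{Step 1: Energy reformulation.} Starting from the identity quoted just before the theorem, I would set $t=\xi-\eta$ and rewrite
\[
\sum_{\bfx\in (\Z/p^r\Z)^2}\bigl|(fd\sigma_{V_\bfm})^\vee(\bfx)\bigr|^4 = \frac{p^{2r}}{|V_\bfm|^4}\sum_{t}\Bigl|\sum_{\substack{\xi,\eta\in V_\bfm \\ \xi-\eta=t}}f(\xi)\overline{f(\eta)}\Bigr|^2.
\]
Cauchy--Schwarz on the inner sum, followed by separating $t=0$ from $t\neq 0$, bounds the right-hand side by
\[
\frac{p^{2r}}{|V_\bfm|^4}\bigl(|V_\bfm|+\max_{t\neq 0}r(t)\bigr)\Bigl(\sum_{\xi\in V_\bfm}|f(\xi)|^2\Bigr)^2,
\]
where $r(t):=|V_\bfm\cap(V_\bfm+t)|$. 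Using $|V_\bfm|\sim p^{r-v_\bfm}$ from Lemma \ref{lem_staborb1}, a direct arithmetic check shows that the theorem reduces to proving $\max_{t\neq 0}r(t)\ll p^{r-v_\bfm-1}$ when $\|\tilde{\bfm}\|\not\equiv 0\pmod p$, and $\max_{t\neq 0}r(t)\ll p^{r-v_\bfm}$ when $\|\tilde{\bfm}\|\equiv 0\pmod p$.

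\textbf{Step 2: Counting intersections.} Using $V_\bfm=p^{v_\bfm}\,\textsf{orb}_{r-v_\bfm}(\tilde{\bfm})$ from Lemma \ref{lem_staborb1}, only $t$ with $v_p(t)\geq v_\bfm$ contribute, so writing $t=p^{v_\bfm}\tilde{t}$ reduces the problem to counting, in $(\Z/p^{r-v_\bfm}\Z)^2$, the solutions of the system
\[
\|\xi\|\equiv \|\tilde{\bfm}\|\pmod{p^{r-v_\bfm}},\qquad \xi\cdot\tilde{t}\equiv \tfrac{1}{2}\|\tilde{t}\|\pmod{p^{r-v_\bfm}},
\]
the second equation coming from subtracting $\|\xi\|\equiv\|\xi-\tilde{t}\|$. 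Writing $\tilde{t}=p^u s$ with $s$ primitive, the linear constraint becomes $\xi\cdot s\equiv p^u\|s\|/2\pmod{p^{r-v_\bfm-u}}$, and I would then count total solutions by reducing modulo $p$ and Hensel-lifting. In the non-degenerate case the mod-$p$ quadric $\|\xi\|\equiv\|\tilde{\bfm}\|\pmod p$ is smooth, the $\F_p$-intersection with the line has $O(1)$ points, and each lifts to $\ll p^{r-v_\bfm-1}$ solutions. In the degenerate case the mod-$p$ quadric factors into two isotropic lines (since $p\equiv 1\pmod 4$ makes $-1$ a square), and one of these lines can coincide with $\{\xi\cdot s\equiv c\pmod p\}$; this produces up to $p$ $\F_p$-solutions and hence an extra factor of $p$ in the lift, yielding the weaker target bound.

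\textbf{Main obstacle.} The delicate point is the degenerate case: $V_\bfm$ is not a smooth $\F_p$-circle, so Hensel's lemma does not apply directly to the quadratic constraint at the singular locus $\|\xi\|\equiv 0\pmod p$. I expect the bulk of the work to lie in a careful case analysis based on the relative position of the direction $s$ (coming from $\tilde{t}$) and the two isotropic directions of the quadratic form, combined with precise tracking of $p$-adic valuations to match the exponent $-(r-3v_\bfm)/2$ exactly rather than the cleaner $-(r-3v_\bfm+1)/2$ from the non-degenerate case.
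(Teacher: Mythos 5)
Your Step 1 follows the same architecture as the paper's proof: pass to the additive energy $\sum_{t}|\sigma(t)|^2$ with $\sigma(t)=\sum_{\xi-\eta=t}f(\xi)\overline{f(\eta)}$, split off $t=0$, apply Cauchy--Schwarz for $t\neq 0$, and reduce to the multiplicity bounds $\max_{t\neq 0}r(t)\ll p^{r-v_\bfm-1}$ (resp.\ $p^{r-v_\bfm}$), which is precisely the paper's Lemma \ref{energy4}. However, as written your reduction has a genuine quantitative gap at the diagonal: you bound the $t=0$ contribution by $\frac{p^{2r}}{|V_\bfm|^4}\,|V_\bfm|\,(\sum_{\xi}|f(\xi)|^2)^2$, i.e.\ you pay the Cauchy--Schwarz factor $r(0)=|V_\bfm|\sim p^{r-v_\bfm}$ there too. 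That single term is already $\sim p^{-(r-3v_\bfm)}(\sum|f(\xi)|^2)^2$, so after taking square roots your reduction can only ever produce the exponent $-\frac{r-3v_\bfm}{2}$, in both cases, no matter how sharply you bound $\max_{t\neq0}r(t)$; the non-degenerate estimate with the extra $p^{-1/2}$ is out of reach. The fix is one line: at $t=0$ the inner sum is exactly $\sigma(0)=\sum_{\xi}|f(\xi)|^2$, so the diagonal contributes $\frac{p^{2r}}{|V_\bfm|^4}(\sum_{\xi}|f(\xi)|^2)^2\sim p^{-(2r-4v_\bfm)}(\sum_{\xi}|f(\xi)|^2)^2$, which is dominated by the square of the target since $v_\bfm\le r-1$. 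This is how the paper treats it.

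For Step 2 you have the difficulty inverted relative to the paper. In the degenerate case $\|\tilde{\bfm}\|\equiv 0\pmod p$ the paper uses only the trivial bound: since $\eta=\xi-t$ is determined by $\xi$, one has $r(t)\le|V_\bfm|\sim p^{r-v_\bfm}$, which already meets the weaker target --- no isotropic-line or tangency analysis is needed, and you should not attempt one (note also that $V_\bfm$ is the $SO_2$-orbit, a proper subset of the circle when $p\equiv 1\pmod 4$, so replacing it by the full circle is admissible only for upper bounds). The genuinely non-trivial case is the non-degenerate one, which the paper handles by parametrizing $\xi,\eta$ by rotation matrices and applying Lemma \ref{lem_Hensel} to a four-variable system whose Jacobian has rank at least $3$ modulo $p$, giving at most $2$ solutions mod $p$ and at most $p^{r-1}$ after lifting. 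Your alternative --- intersecting the circle $\|\xi\|\equiv\|\tilde{\bfm}\|$ with the line $\xi\cdot\tilde t\equiv\|\tilde t\|/2$ in the two point variables --- can be made to work and is arguably more geometric, but you must track the modulus drop caused by writing $\tilde t=p^{u}s$ (the linear congruence only holds mod $p^{r-v_\bfm-u}$, so each mod-$p^{r-v_\bfm-u}$ solution lifts with multiplicity $p^{u}$ through the quadratic constraint); once the diagonal term is repaired, this route recovers the stated exponents.
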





    
\begin{remark}\label{rm25}
  In comparison between Theorem \ref{restriction3} and Theorem \ref{restriction6}, the latter is weaker, which comes from elements $\mathbf{m}$ with $\Vert \tilde{\bfm} \Vert \equiv 0 \, (\mod p)$. 
\end{remark}

The next two lemmas are duality versions of Theorem \ref{restriction3} and Theorem \ref{restriction6}.

\begin{lemma} \label{lem_Ehat_Vm}
Let $p\equiv 3\, \pmod 4$. Let $E\subset (\Z/p^r\Z)^2$ and $\bfm\in (\Z/p^r\Z)^2$ be such that $\bfm\neq \bfo$. Then
\[
\sum_{\bfx \in V_\bfm} \lv \widehat{E}(\bfx)\rv^2 \ll p^{-\frac{5r+v_\bfm+1}{2}}\lv E\rv^{3/2}.
\]
\end{lemma}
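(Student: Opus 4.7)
The plan is to derive the lemma from Theorem \ref{restriction3} by a standard self-duality trick. Setting $f(\bfx):=\overline{\widehat{E}(\bfx)}$ for $\bfx\in V_\bfm$, write
\[
S:=\sum_{\bfx\in V_\bfm}\lv\widehat{E}(\bfx)\rv^2 = \sum_{\bfx\in V_\bfm} f(\bfx)\widehat{E}(\bfx).
\]
I would then expand $\widehat{E}(\bfx)=p^{-2r}\sum_{\bfy}E(\bfy)\chi_r(-\bfx\cdot\bfy)$, interchange the order of summation, and recognize the inner sum over $\bfx\in V_\bfm$ as $|V_\bfm|\cdot(f d\sigma_{V_\bfm})^\vee(-\bfy)$. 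This rewrites $S$ as a pairing
\[
S \;=\; p^{-2r}\lv V_\bfm\rv \sum_{\bfy}E(\bfy)\,(fd\sigma_{V_\bfm})^\vee(-\bfy).
\]

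Next, I would apply Hölder's inequality in the dual pair $(L^{4/3},L^4)$ to get
\[
S \;\le\; p^{-2r}\lv V_\bfm\rv\, \lv E\rv^{3/4}\,\Bigl(\sum_{\bfy}\lv(fd\sigma_{V_\bfm})^\vee(\bfy)\rv^4\Bigr)^{1/4}.
\]
The last factor is controlled by Theorem \ref{restriction3}, giving
\[
\Bigl(\sum_{\bfy}\lv(fd\sigma_{V_\bfm})^\vee(\bfy)\rv^4\Bigr)^{1/4} \;\ll\; p^{-(r-3v_\bfm+1)/4}\Bigl(\sum_{\bfx\in V_\bfm}\lv f(\bfx)\rv^2\Bigr)^{1/2} \;=\; p^{-(r-3v_\bfm+1)/4}\,S^{1/2}.
\]

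The crucial (and convenient) observation is that the right-hand side contains $S^{1/2}$ itself: since $\|f\|_{L^2(V_\bfm)}^2 = S$, the bound is self-referential. Combining gives $S \ll p^{-2r}\lv V_\bfm\rv\, \lv E\rv^{3/4}\, p^{-(r-3v_\bfm+1)/4}\, S^{1/2}$, and solving for $S$ (squaring) yields
\[
S \;\ll\; p^{-4r}\lv V_\bfm\rv^2\, \lv E\rv^{3/2}\, p^{-(r-3v_\bfm+1)/2}.
\]
Finally, using $\lv V_\bfm\rv = |\orb_r(\bfm)|\sim p^{r-v_\bfm}$ from Lemma \ref{lem_staborb} so that $|V_\bfm|^2\sim p^{2r-2v_\bfm}$, the exponent collapses: $-4r+(2r-2v_\bfm)-(r-3v_\bfm+1)/2 = -(5r+v_\bfm+1)/2$, matching the claim.

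I do not expect significant obstacles: every step is a direct computation or an application of a result already in the paper. The only point that requires care is tracking the arithmetic of the exponents of $p$ through the Hölder step and the $|V_\bfm|\sim p^{r-v_\bfm}$ substitution; nothing here uses the hypothesis $p\equiv 3\pmod 4$ beyond what is already encoded in Theorem \ref{restriction3} and Lemma \ref{lem_staborb}, which is precisely why the same argument in the $p\equiv 1\pmod 4$ case will yield a weaker exponent (reflecting the weaker input of Theorem \ref{restriction6}).
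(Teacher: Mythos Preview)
Your proposal is correct and is essentially identical to the paper's own proof: both rewrite $\sum_{\bfx\in V_\bfm}|\widehat{E}(\bfx)|^2$ as a pairing of $E$ against $(\widehat{E}\,d\sigma_{V_\bfm})^\vee$, apply H\"older with exponents $(4/3,4)$, invoke Theorem~\ref{restriction3}, and then solve the self-referential inequality using $|V_\bfm|\sim p^{r-v_\bfm}$ from Lemma~\ref{lem_staborb}. The only cosmetic difference is that the paper normalizes by $p^{2r}/|V_\bfm|$ at the outset rather than carrying the factor $p^{-2r}|V_\bfm|$ through, but the arithmetic and the exponent bookkeeping are the same.
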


\begin{proof}
By H{\" o}lder's inequality and  Theorem \ref{restriction3}, we have 
\begin{align*}
    &\frac{p^{2r}}{\lv V_\bfm\rv}\sum_{\bfx \in V_\bfm} \lv \widehat{E}(\bfx) \rv^2 =  \frac{p^{2r}}{\lv V_\bfm\rv}\sum_{\bfx \in V_\bfm} \widehat{E}(\bfx) \overline{\widehat{E}(\bfx)} = \sum_{\bfy \in (\Zbb/p^r \Zbb)^2} E(\bfy) \overline{\frac{1}{\lv V_\bfm\rv}\sum_{\bfx \in V_\bfm} \chi_r(\bfy \cdot \bfx) \widehat{E}(\bfx)} \\ &= \sum_{\bfy \in (\Zbb /p^r \Zbb)^2}E(\bfy) (\widehat{E}d\sigma_{V_\bfm})^\vee (\bfy) 
    \le \l( \sum_{\bfy \in (\Zbb/p^r \Zbb)^2} \lv E(\bfy)\rv^{\frac{4}{3}} \r)^{\frac{3}{4}} \l( \sum_{\bfy \in (\Zbb/p^r \Zbb)^2}\lv (\widehat{E}d\sigma_{V_\bfm})\rv^4 \r)^{\frac{1}{4}}\\
    & \ll |E|^{\frac{3}{4}} \l( p^{-\frac{r-3v_\bfm+1}{2}}\sum_{\bfx \in V_\bfm}\lv \widehat{E}(\bfx )\rv^2 \r)^{\frac{1}{2}}.
\end{align*}

Recall from Lemma \ref{lem_staborb} that $\lv V_\bfm\rv \sim p^{r-v_\bfm}$, the previous inequality implies
\begin{align*}
    p^{r+v_\bfm} \sum_{\bfx \in V_\bfm} \lv \widehat{E}(\bfx)\rv^2 \ll |E|^{\frac{3}{4}} \left(p^{\frac{-r+3v_\bfm -1}{2}} \sum_{\bfx \in V_\bfm} \lv \widehat{E}(\bfx)\rv^2\right)^{\frac{1}{2}}. 
\end{align*}
It follows that
\[  \sum_{\bfx \in V_\bfm} \lv \widehat{E}(\bfx)\rv^2 \ll p^{-\frac{5r+v_\bfm+1}{2}}\lv E\rv^{\frac{3}{2}}.\]
\end{proof}
\begin{lemma}\label{lem_Ehat_Vm1}
    Let $p\equiv 1\, (\mod 4)$ be a prime and $r\ge 1$ be an integer. Let $E\subset (\bZ/p^r\bZ)^2$ and $\mathbf{m}\in (\bZ/p^r\bZ)^2$ be such that $\bfm\neq \bfo$. Suppose that $\bfm = p^{v_\bfm}\tilde{\bfm}$, where $v_{\tilde{\bfm}} =0$.

If $\lV \tilde{\bfm}\rV \not\equiv 0 \pmod p$, then \[
\sum_{\bfx \in V_\bfm} \lv \widehat{E}(\bfx)\rv^2 \ll p^{-\frac{5r+v_{\bfm}+1}{2}}|E|^{\frac{3}{2}};
\]

If $\lV \tilde{\bfm}\rV \equiv 0 \pmod p$, then \[
\sum_{\bfx \in V_\bfm} \lv \widehat{E}(\bfx)\rv^2 \ll p^{-\frac{5r+v_{\bfm}}{2}}|E|^{\frac{3}{2}}.
\]
\end{lemma}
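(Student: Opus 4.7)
The plan is to imitate the proof of Lemma \ref{lem_Ehat_Vm} almost verbatim, substituting the corresponding restriction estimate from Theorem \ref{restriction6} and splitting into its two cases. First I would unfold the definition of $(\widehat{E}\,d\sigma_{V_\bfm})^{\vee}$ and reverse the order of summation to rewrite
\[
\frac{p^{2r}}{|V_\bfm|}\sum_{\bfx\in V_\bfm}|\widehat{E}(\bfx)|^2 \;=\; \sum_{\bfy\in(\Z/p^r\Z)^2} E(\bfy)\,\overline{(\widehat{E}\,d\sigma_{V_\bfm})^{\vee}(\bfy)},
\]
and then apply H\"older's inequality with exponents $4/3$ and $4$ to bound the right-hand side by $|E|^{3/4}\bigl(\sum_{\bfy}|(\widehat{E}\,d\sigma_{V_\bfm})^{\vee}(\bfy)|^4\bigr)^{1/4}$.

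Next I would feed Theorem \ref{restriction6} (applied to $f=\widehat{E}|_{V_\bfm}$) into the fourth-power factor. In the case $\lV\tilde{\bfm}\rV\not\equiv 0\,(\mod p)$ this yields
\[
\Bigl(\sum_{\bfy}|(\widehat{E}\,d\sigma_{V_\bfm})^{\vee}(\bfy)|^4\Bigr)^{1/2}\ll p^{-(r-3v_\bfm+1)/2}\sum_{\bfx\in V_\bfm}|\widehat{E}(\bfx)|^2,
\]
while in the case $\lV\tilde{\bfm}\rV\equiv 0\,(\mod p)$ one loses a factor of $p^{1/2}$ and instead obtains $p^{-(r-3v_\bfm)/2}\sum_{\bfx\in V_\bfm}|\widehat{E}(\bfx)|^2$. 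Setting $S:=\sum_{\bfx\in V_\bfm}|\widehat{E}(\bfx)|^2$ and invoking Lemma \ref{lem_staborb1} to write $|V_\bfm|\sim p^{r-v_\bfm}$, the combined estimate becomes $p^{r+v_\bfm}S\ll |E|^{3/4}\cdot p^{-(r-3v_\bfm+1)/4}\cdot S^{1/2}$ in the nondegenerate case (respectively with $-(r-3v_\bfm)/4$ in the degenerate case), and I would then solve for $S$ by dividing through by $S^{1/2}$ and squaring.

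The argument is entirely parallel to that of Lemma \ref{lem_Ehat_Vm}, so I do not anticipate any substantive obstacle. The only items requiring care are: (i) checking that Lemma \ref{lem_staborb1} really does give $|V_\bfm|\sim p^{r-v_\bfm}$ (the constant changes from $1+1/p$ to $1-1/p$, which is harmless for odd $p$), and (ii) bookkeeping the extra $p^{1/2}$ loss in the degenerate case so that the two resulting exponents $(5r+v_\bfm+1)/2$ and $(5r+v_\bfm)/2$ emerge correctly from the final algebra.
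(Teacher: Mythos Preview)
Your proposal is correct and follows exactly the approach the paper takes: its proof of Lemma \ref{lem_Ehat_Vm1} consists solely of the line ``The proof is same as that of Lemma \ref{lem_Ehat_Vm}, with applying Theorem \ref{restriction6}.'' Your detailed sketch and the exponent bookkeeping are accurate in both cases.
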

\begin{proof}
The proof is same as that of Lemma \ref{lem_Ehat_Vm}, with applying Theorem \ref{restriction6}.  
\end{proof}

\begin{corollary}\label{key-2-d}
Let $p$ be a prime, and 
    Let $p\equiv 3 \pmod 4$. Let $A, B$ be sets in $(\Zbb /p^r \Zbb)^2$.  Then we have 
    \[\sum_{\bfm \ne \bfo}\sum_{
    \bfm' \in V_\bfm}p^{v_\bfm} \lv \widehat{A}(\bfm)\rv^2 \lv \widehat{B}(\bfm ')\rv^2\ll p^{-4r-1}|A||B|^{\frac{3}{2}}.\]
\end{corollary}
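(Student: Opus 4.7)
The plan is a direct combination of the dual restriction estimate from Lemma \ref{lem_Ehat_Vm} and the Plancherel identity, together with the trivial bound $v_\bfm \le r-1$ for $\bfm \ne \bfo$. In other words, I first evaluate the inner sum in $\bfm'$ using the restriction/extension bound for $B$ on each orbit, then absorb the leftover weight $p^{v_\bfm}$ into the exponent saved by that bound, and finally sum in $\bfm$ with Plancherel applied to $A$.

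Concretely, the first step is to apply Lemma \ref{lem_Ehat_Vm} to $B$ for each fixed $\bfm \ne \bfo$, which yields
\[
\sum_{\bfm' \in V_\bfm} |\widehat{B}(\bfm')|^2 \ll p^{-(5r+v_\bfm+1)/2}\,|B|^{3/2}.
\]
Plugging this into the double sum and collecting the $p$-powers with the prefactor $p^{v_\bfm}$, the exponent of $p$ in front of $|\widehat{A}(\bfm)|^2$ becomes $v_\bfm - (5r+v_\bfm+1)/2 = (v_\bfm - 5r - 1)/2$. The key observation — which is the only non-routine input — is that for any $\bfm \ne \bfo$ at least one coordinate is nonzero, so $v_\bfm \le r-1$, whence $p^{v_\bfm/2} \le p^{(r-1)/2}$. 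Pulling this uniform bound out of the $\bfm$-sum leaves
\[
|B|^{3/2}\, p^{-(5r+1)/2 + (r-1)/2}\, \sum_{\bfm}|\widehat{A}(\bfm)|^2 \;=\; |B|^{3/2}\, p^{-(2r+1)}\, \sum_{\bfm}|\widehat{A}(\bfm)|^2,
\]
and Plancherel gives $\sum_{\bfm}|\widehat{A}(\bfm)|^2 = p^{-2r}|A|$. Combining exponents yields $p^{-4r-1}|A||B|^{3/2}$, which is exactly the target.

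There is no genuine obstacle here beyond exponent bookkeeping; the mild subtlety is that the weight $p^{v_\bfm}$ in the statement is chosen precisely so that, after invoking Lemma \ref{lem_Ehat_Vm}, the residual $v_\bfm$-dependence collapses to a single power $p^{v_\bfm/2}$, which is then tight enough at the worst case $v_\bfm = r-1$ to still produce the sharp $p^{-4r-1}$. Note also that this argument uses $p \equiv 3 \pmod 4$ only through Lemma \ref{lem_Ehat_Vm}; the analogous corollary in the $p \equiv 1 \pmod 4$ case would have to split the sum according to whether $\lV \tilde\bfm \rV \equiv 0 \pmod p$ and would incur an extra $p^{1/2}$ loss on the bad piece, matching Remark \ref{rm25}.
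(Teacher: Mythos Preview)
Your proof is correct and follows essentially the same approach as the paper: apply Lemma \ref{lem_Ehat_Vm} to the inner sum over $\bfm'\in V_\bfm$, combine the resulting exponent with the weight $p^{v_\bfm}$, bound the remaining $p^{v_\bfm/2}$ factor by $p^{(r-1)/2}$ using $v_\bfm\le r-1$, and finish with the Plancherel formula for $A$. The paper presents this as pulling out $\max_{\bfz\ne\bfo}\bigl(p^{v_\bfz}\sum_{\bfm'\in V_\bfz}|\widehat{B}(\bfm')|^2\bigr)$ before applying Plancherel and Lemma \ref{lem_Ehat_Vm}, but the computation and exponent bookkeeping are identical to yours.
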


\begin{proof} 
Applying Lemma \ref{lem_Ehat_Vm} and Plancherel formula, as above, one has
\begin{align*}
   & \sum_{\substack{\bfm \ne \bfo,\\
    \bfm' \in V_\bfm}}p^{v_\bfm}\lv \widehat{A}(\bfm)\rv ^2\lv \widehat{B}(\bfm')\rv^2 \le \sum_{\bfm}|\widehat{A}(\bfm)|^2 \cdot \max_{\bfz \ne \bfo} \l( p^{v_\bfz} \sum_{\bfm '\in V_\bfz} |\widehat{B}(\bfm')|^2 \r)  \\ 
    &\qquad\qquad \ll p^{-2r}|A| \cdot \max_{\bfz \ne \bfo} \l( p^{v_\bfz} \cdot p^{-\frac{5r+v_\bfz +1}{2}}|B|^{\frac{3}{2}} \r) \ll p^{-4r-1}|A||B|^{\frac{3}{2}},
\end{align*}
by noting that $\max v_\bfz = r-1$ for $\bfz \ne \bfo$. Thus, the theorem follows.
\end{proof}

\begin{corollary}\label{keyy-2-d}
    Let $p\equiv 1 \pmod 4$ be a prime, let $A, B$ 
    be sets in $(\Zbb /p^r \Zbb)^2$.  Then we have
    \[ \sum_{\bfm \ne \bfo} \sum_{\bfm' \in V_\bfm } p^{v_\bfm} \lv \widehat{A}(\bfm ) \rv \lv \widehat{B} (\bfm') \rv \ll p^{-4r-1/2}|A||B|^{3/2}.\]
\end{corollary}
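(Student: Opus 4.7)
My plan is to prove the stated bound by mirroring the proof of Corollary~\ref{key-2-d} with Lemma~\ref{lem_Ehat_Vm1} substituted for Lemma~\ref{lem_Ehat_Vm}, interpreting the summand as $|\widehat A(\bfm)|^2|\widehat B(\bfm')|^2$. The literal unsquared form written in the statement is not compatible with the target bound — already for $|A|=|B|=1$ one has $|\widehat A(\bfm)|=|\widehat B(\bfm')|=p^{-2r}$, giving a left side of order $p^{-4r}\sum_{\bfm\neq\bfo}p^{v_\bfm}|V_\bfm|\sim p^{-r}$, which is much larger than $p^{-4r-1/2}$. The squared reading matches the summand in Corollary~\ref{key-2-d} and the form actually required by the Plancherel expansion in the proof of Theorem~\ref{thm:9.10}, so that is the version I will prove.

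The key step is the familiar move of separating the variables by pulling out a maximum over orbits:
\[
\sum_{\bfm\neq\bfo}\sum_{\bfm'\in V_\bfm} p^{v_\bfm}|\widehat A(\bfm)|^2|\widehat B(\bfm')|^2
\le \Bigl(\sum_{\bfm}|\widehat A(\bfm)|^2\Bigr)\cdot \max_{\bfz\neq\bfo}\Bigl(p^{v_\bfz}\sum_{\bfm'\in V_\bfz}|\widehat B(\bfm')|^2\Bigr).
\]
Plancherel handles the first factor, giving $\sum_{\bfm}|\widehat A(\bfm)|^2 = p^{-2r}|A|$. For the inner factor I would apply Lemma~\ref{lem_Ehat_Vm1} with $E=B$; taking the worse of its two subcases (the degenerate branch $\|\tilde\bfz\|\equiv 0\pmod p$) gives $\sum_{\bfm'\in V_\bfz}|\widehat B(\bfm')|^2\ll p^{-(5r+v_\bfz)/2}|B|^{3/2}$, so the quantity inside the max is $\ll p^{v_\bfz/2 - 5r/2}|B|^{3/2}$. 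This is increasing in $v_\bfz$, and $v_\bfz\le r-1$ for any $\bfz\neq\bfo$, so the max is attained at $v_\bfz=r-1$ with value $\ll p^{-2r-1/2}|B|^{3/2}$. Multiplying the two factors yields $p^{-4r-1/2}|A||B|^{3/2}$.

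The single genuine obstacle, and the only point of difference from Corollary~\ref{key-2-d}, is the degenerate branch of Lemma~\ref{lem_Ehat_Vm1}, which is weaker by a factor of $p^{1/2}$ than the corresponding bound in Lemma~\ref{lem_Ehat_Vm}; since the orbit maximum may well be attained on a $\bfz$ with $\|\tilde\bfz\|\equiv 0\pmod p$, this loss cannot be avoided with the current restriction technology. It propagates directly into the final exponent, producing $p^{-4r-1/2}$ here in place of the $p^{-4r-1}$ available for $p\equiv 3\pmod 4$, and is exactly what forces the weaker density hypothesis $\delta_A\delta_B\ge 2p^{-1}$ in Theorem~\ref{thm:9.10} compared to $\delta_A^{1/2}\delta_B\ge 2p^{-1}$ in Theorem~\ref{thm:9.9}.
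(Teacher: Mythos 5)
Your proof is correct and is essentially the paper's own argument: the paper simply states that the proof is identical to that of Corollary~\ref{key-2-d} with Lemma~\ref{lem_Ehat_Vm1} in place of Lemma~\ref{lem_Ehat_Vm}, which is exactly the computation you carry out, including the extra $p^{1/2}$ loss from the degenerate branch $\lV\tilde{\bfz}\rV\equiv 0\pmod p$. You are also right that the unsquared Fourier coefficients in the statement are a typo; the squared version is what is actually invoked in the proof of Theorem~\ref{thm:9.4}.
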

\begin{proof}
The proof is same as that of Corollary \ref{key-2-d}, with applying Lemma \ref{lem_Ehat_Vm1}.  
\end{proof}

\section{Incidences between points and rigid-motions}

Recall that $G_r=SO_2(\Zbb /p^r \Zbb )$. We denote the set of rigid motion over $\Z/p^r\Z$ by 
\[
\mathcal{R}_r :=\{(g,\bfz):\, g\in G_r,\,\mathbf{z}\in (\Zbb /p^r \Zbb)^2\}. 
\]
Indeed, it is a semi-direct product of groups (see \cite{P.X.2024} for example). As a set, one has $\mathcal{R}_r = G_r\times (\Zbb /p^r \Zbb)^2$ and $|\mathcal{R}_r|\sim p^{3r}$.

We say a pair of points $(\bfx,\bfy)\in (\Zbb /p^r \Zbb )^2 \times (\Zbb /p^r \Zbb )^2$ is incident to a rigid-motion $(g, \mathbf{z})$ if $g\mathbf{y}+\mathbf{z}=\mathbf{x}$. For $P = A \times B \subset (\Zbb /p^r \Zbb )^2 \times (\Zbb /p^r \Zbb )^2$ and $R \subset \mathcal{R}_r $, we define
\[ \mathcal{I}(P,R) := \Big\{ \big((g,\bfz),(\bfx,\bfy)\big) \in R\times P:\, g\bfy +\bfz =\bfx  \Big\}\]
as the number of incidences between $P$ and $R$. This section is devoted to bounding $\mathcal{I}(P, R)$ from above and below. 

We first state a universal bound that will be proved by using a standard discrete Fourier analysis argument, followed by an improvement obtained via Corollary \ref{key-2-d}.

\begin{theorem}\label{thm:9.5}
    Let $p$ be an odd prime, let $P=A\times B \subset (\Zbb /p^r\Zbb)^2 \times (\Zbb /p^r \Zbb)^2$ and $ R \subset \mathcal{R}_r $. Then
   \[ \lv \mathcal{I}(P,R) - \frac{|P||R|}{p^{2r}} \rv \ll p^{\frac{3r-1}{2}}|P|^{\frac{1}{2}}|R|^{\frac{1}{2}}.\]
\end{theorem}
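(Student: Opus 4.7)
My plan is to execute the standard discrete Fourier analysis argument that expresses $\mathcal{I}(P,R)$ as a main term plus a Fourier error. First I would expand the incidence condition via orthogonality,
\[
\mathbf{1}[g\bfy+\bfz=\bfx]=p^{-2r}\sum_{\bfm\in(\Zbb/p^r\Zbb)^2}\chi_r\bigl(\bfm\cdot(g\bfy+\bfz-\bfx)\bigr),
\]
insert it into $\mathcal{I}(P,R)$, and factor. Using $\bfm\cdot g\bfy=g^T\bfm\cdot\bfy$, the sums over $\bfx\in A$ and $\bfy\in B$ yield $p^{2r}\widehat{A}(\bfm)$ and $p^{2r}\overline{\widehat{B}(g^T\bfm)}$ respectively. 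The $\bfm=\bfo$ contribution is exactly $|P||R|/p^{2r}$, so the error is
\[
\Delta := \mathcal{I}(P,R)-\frac{|P||R|}{p^{2r}}=p^{2r}\sum_{\bfm\neq\bfo}\widehat{A}(\bfm)\sum_{(g,\bfz)\in R}\chi_r(\bfm\cdot\bfz)\,\overline{\widehat{B}(g^T\bfm)}.
\]

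Next, I would apply Cauchy--Schwarz on the outer sum over $(g,\bfz)\in R$ and then enlarge $R$ to all of $\mathcal{R}_r$ before expanding the square. The orthogonality identity $\sum_{\bfz}\chi_r((\bfm-\bfm')\cdot\bfz)=p^{2r}\,\mathbf{1}[\bfm=\bfm']$ collapses the resulting double Fourier sum to its diagonal, producing
\[
|\Delta|^2\le p^{6r}\,|R|\sum_{\bfm\neq\bfo}|\widehat{A}(\bfm)|^2\sum_{g\in G_r}|\widehat{B}(g^T\bfm)|^2.
\]
Since $g\mapsto g^T=g^{-1}$ is a bijection of $G_r$ and each point of $V_\bfm$ is reached by exactly $|\textsf{stab}_r(\bfm)|=p^{v_\bfm}$ elements of $G_r$ (Lemmas \ref{lem_staborb} and \ref{lem_staborb1}, which agree on this stabiliser size regardless of $p\pmod 4$), we have
\[
\sum_{g\in G_r}|\widehat{B}(g^T\bfm)|^2=p^{v_\bfm}\sum_{\bfm'\in V_\bfm}|\widehat{B}(\bfm')|^2\le p^{r-1}\cdot p^{-2r}|B|,
\]
using $v_\bfm\le r-1$ for $\bfm\neq\bfo$ together with Plancherel. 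A final application of Plancherel to $\sum_\bfm|\widehat{A}(\bfm)|^2\le p^{-2r}|A|$ then yields $|\Delta|^2\le p^{3r-1}|R||P|$, which is exactly the claimed inequality.

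I do not anticipate a serious obstacle: every step is routine. The only delicate point is the orbit--stabiliser bookkeeping for $SO_2(\Zbb/p^r\Zbb)$, namely that $g\mapsto g^T\bfm$ traces out $V_\bfm$ with multiplicity $p^{v_\bfm}$; this is precisely what Lemmas \ref{lem_staborb} and \ref{lem_staborb1} package, and it is uniform in $p\pmod 4$ because only the stabiliser size (identical in both cases) enters. The sole source of slack in the argument is the inequality $v_\bfm\le r-1$, and removing this slack is exactly the role played by the restriction estimates (Lemma \ref{lem_Ehat_Vm} and Corollary \ref{key-2-d}) that produce the sharper bound appearing in the next step of the paper.
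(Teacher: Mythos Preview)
Your proposal is correct and follows essentially the same route as the paper: Fourier expand the incidence condition, isolate the main term, apply Cauchy--Schwarz over $(g,\bfz)\in R$, extend to all of $\mathcal{R}_r$, collapse to the diagonal via orthogonality in $\bfz$, and then use the orbit--stabiliser count $|\textsf{stab}_r(\bfm)|=p^{v_\bfm}\le p^{r-1}$ together with Plancherel for both $A$ and $B$. The only cosmetic difference is that the paper packages the two Fourier factors as $\widehat{P}(-\bfm,g\bfm)=\widehat{A}(-\bfm)\widehat{B}(g\bfm)$ and works with $g\bfm$ rather than $g^{T}\bfm$, which is immaterial since $g^{T}=g^{-1}$ on $G_r$ and the orbit $V_\bfm$ is the same.
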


\begin{proof}
We have
\begin{align*}
    &\mathcal{I}(P, R)=\sum_{\substack{(\bfx, \bfy)\in P, \\ (g, \bfz)\in R}}1_{\bfx=g\bfy+\bfz}=\frac{1}{p^{2r}}\sum_{\bfm \in (\Zbb /p^r \Zbb)^2}\sum_{\substack{(\bfx, \bfy)\in P,\\ (g, \bfz)\in R}}\chi_r\left(\bfm\cdot (\bfx-g\bfy-\bfz )\right)\\
 &=\frac{|P||R|}{p^{2r}}+\frac{1}{p^{2r}}\sum_{\bfm \in (\Zbb /p^r \Zbb)^2\setminus \{ \bfo \}}\sum_{\substack{(\bfx, \bfy)\in P,\\ (g, \bfz)\in R}}\chi_r(\bfm \cdot(\bfx-g\bfy-\bfz))\\
 &=\frac{|P||R|}{p^{2r}}+p^{2r}\sum_{\bfm\ne \bfo }\sum_{(g, \bfz)\in R}\widehat{P}(-\bfm, g\bfm)\chi_r(-\bfm \bfz)=:I+II,
\end{align*}
where 
\[
\widehat{P}(\bfu, \bfv)=p^{-4r}\sum_{(\bfx, \bfy) \in P} \chi_r(-\bfx \cdot \bfu-\bfy \cdot \bfv).
\]
We next bound the second term. By the Cauchy-Schwarz inequality, we have 
\begin{align*}
  II&= p^{2r}\sum_{(g, \bfz)\in R}\sum_{\bfm\ne \bfo }\widehat{P}(-\bfm, g\bfm)\chi_r(-\bfm \bfz)\\
  &\le p^{2r} |R|^{1/2}\left(\sum_{(g, \bfz)\in \mathcal{R}_r}\sum_{\bfm_1, \bfm_2\ne \bfo} \widehat{P}(-\bfm_1, g\bfm_1)\overline{\widehat{P}(-\bfm_2, g\bfm_2)}\chi_r(\bfz \cdot (-\bfm_1+\bfm_2))\right)^{1/2}\\
  &= p^{2r} |R|^{1/2}\left(\sum_{g\in G_r}\sum_{\bfm_1, \bfm_2\ne \bfo} \widehat{P}(-\bfm_1, g\bfm_1)\overline{\widehat{P}(-\bfm_2, g\bfm_2)}\sum\limits_{\bfz\in (\bZ/p^r\bZ)^2}\chi_r(\bfz \cdot (-\bfm_1+\bfm_2))\right)^{1/2}\\
  &=p^{2r} |R|^{1/2}\left(p^{2r}\sum_{g\in G_r}\sum_{\bfm \ne \bfo}|\widehat{P}(\bfm, -g\bfm)|^2\right)^{1/2}.
\end{align*}

Since $P=A\times B$, we have
\[\widehat{P}(\bfm, -g\bfm)=\widehat{A}(\bfm)\widehat{B}(-g\bfm).\]
Thus, following Lemma \ref{lem_staborb1} and Plancherel fomula, we have
\begin{align*}
    &\sum_{g\in G_r}\sum_{\bfm \ne \bfo}|\widehat{P}(\bfm, -g\bfm)|^2 =\sum_{\bfm \ne \bfo}|\widehat{A}(\bfm)|^2\sum_{g\in G_r}|\widehat{B}(-g\bfm)|^2\\ 
    & \ll \sum_{\bfm \ne \bfo}|\widehat{A} (\bfm)|^2 \sum_{\bfm' \in V_\bfm}p^{v_{\bfm}} |\widehat{B}(\bfm')|^2 \le \sum_{\bfm \ne \bfo}p^{v_{\bfm}}|\widehat{A} (\bfm)|^2 \sum_{\bfm'} |\widehat{B}(\bfm')|^2 \\
    &\le p^{r-1}\sum_{\bfm} |\widehat{A}(\bfm)|^2 \sum_{\bfm'} |\widehat{B}(\bfm')|^2 = p^{r-1}\cdot \frac{|A|}{p^{2r}}\cdot \frac{|B|}{p^{2r}} =  \frac{|A||B|}{p^{3r+1}}.
\end{align*}
Here we have used the fact that the stabilizer of a non-zero element $\bfm$ in $SO_2(\Zbb /p^r \Zbb)$ is $\sim p^{v_{\bfm}}\le p^{r-1}$.

Finally, we obtain that
\begin{align*}
    II &  \ll p^{3r}|R|^{\frac{1}{2}}\l( \frac{|A||B|}{p^{3r+1}} \r)^{\frac{1}{2}} = p^{\frac{3r-1}{2}}|R|^{\frac{1}{2}}|P|^{\frac{1}{2}}.
\end{align*} 
This completes the proof.
\end{proof}

Next, we estimate the incidences in another way. 

\begin{theorem}\label{thm:9.4}
   Let $p$ be an odd prime. Let $P=A\times B \subset (\Zbb /p^r\Zbb)^2 \times (\Zbb /p^r \Zbb)^2$ and $ R \subset \mathcal{R}_r$. 

If $p\equiv 3\pmod 4$, then  \[ \lv \mathcal{I}(P,R) - \frac{|P||R|}{p^{2r}} \rv \ll p^{r-\frac{1}{2}}|P|^{\frac{1}{2}}|R|^{\frac{1}{2}}|B|^{\frac{1}{4}} .\]

If $p\equiv 1\pmod 4$, then  \[ \lv \mathcal{I}(P,R) - \frac{|P||R|}{p^{2r}} \rv \ll p^{r-\frac{1}{4}}|P|^{\frac{1}{2}}|R|^{\frac{1}{2}}|B|^{\frac{1}{4}} .\]  
\end{theorem}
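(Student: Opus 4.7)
\medskip

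\textbf{Plan of proof for Theorem \ref{thm:9.4}.} The plan is to repeat the Fourier-analytic decomposition of $\mathcal{I}(P,R)$ exactly as in the proof of Theorem \ref{thm:9.5}, and then replace the crude bound used there on the ``error'' sum by the sharper estimates from Corollary \ref{key-2-d} and Corollary \ref{keyy-2-d}. In more detail, first I would write
\[
\mathcal{I}(P,R)=\frac{|P||R|}{p^{2r}}+p^{2r}\sum_{\bfm\neq \bfo}\sum_{(g,\bfz)\in R}\widehat{P}(-\bfm,g\bfm)\chi_r(-\bfm\cdot \bfz)=:I+II,
\]
exactly as before, with $\widehat{P}(\bfu,\bfv)=\widehat{A}(\bfu)\widehat{B}(\bfv)$ because $P=A\times B$.

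Next I would apply the Cauchy--Schwarz inequality in $(g,\bfz)\in R$ and extend $\bfz$ to all of $(\Z/p^r\Z)^2$ to annihilate the phase via orthogonality, ending up (just as in the proof of Theorem \ref{thm:9.5}) with
\[
II \;\ll\; p^{2r}|R|^{1/2}\Bigl(p^{2r}\sum_{\bfm\neq \bfo}|\widehat{A}(\bfm)|^2\sum_{g\in G_r}|\widehat{B}(-g\bfm)|^2\Bigr)^{1/2}.
\]
Using that the stabilizer of a nonzero $\bfm$ in $G_r$ has size $\sim p^{v_\bfm}$ (Lemmas \ref{lem_staborb} and \ref{lem_staborb1}), the inner $g$-sum becomes $\ll p^{v_\bfm}\sum_{\bfm'\in V_\bfm}|\widehat{B}(\bfm')|^2$, so the whole expression under the square root is controlled by
\[
p^{2r}\sum_{\bfm\neq \bfo}\sum_{\bfm'\in V_\bfm} p^{v_\bfm}|\widehat{A}(\bfm)|^2|\widehat{B}(\bfm')|^2.
\]

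At this point, the key step is to not throw away the $V_\bfm$ structure (as was done in Theorem \ref{thm:9.5} by bounding $p^{v_\bfm}\le p^{r-1}$ and using Plancherel), but rather to plug in Corollary \ref{key-2-d} directly when $p\equiv 3\pmod 4$, which yields the bound $\ll p^{-4r-1}|A||B|^{3/2}$ for this weighted sum. Substituting back gives
\[
II\;\ll\; p^{2r}|R|^{1/2}\bigl(p^{2r}\cdot p^{-4r-1}|A||B|^{3/2}\bigr)^{1/2}\;=\;p^{r-1/2}|R|^{1/2}|A|^{1/2}|B|^{3/4}\;=\;p^{r-1/2}|P|^{1/2}|R|^{1/2}|B|^{1/4},
\]
using $|P|=|A||B|$. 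The case $p\equiv 1\pmod 4$ is identical but invokes Corollary \ref{keyy-2-d} instead, which costs an extra $p^{1/2}$ and produces the $p^{r-1/4}$ factor.

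There is no genuine obstacle: the only thing to be careful about is bookkeeping of the exponents of $p$ and verifying that the weight $p^{v_\bfm}$ produced by the stabilizer size matches exactly the weight appearing in Corollaries \ref{key-2-d} and \ref{keyy-2-d}, which is precisely why those corollaries were formulated with that weight. The bound on $|I- \mathcal{I}(P,R)|=II$ then gives the two claimed inequalities.
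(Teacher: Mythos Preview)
Your proposal is correct and follows essentially the same approach as the paper: decompose $\mathcal{I}(P,R)$ via Fourier analysis as in Theorem \ref{thm:9.5}, apply Cauchy--Schwarz and orthogonality to reduce $II$ to the weighted sum $\sum_{\bfm\neq\bfo}\sum_{\bfm'\in V_\bfm}p^{v_\bfm}|\widehat{A}(\bfm)|^2|\widehat{B}(\bfm')|^2$, and then invoke Corollary \ref{key-2-d} or \ref{keyy-2-d} instead of the cruder Plancherel bound. Your exponent bookkeeping is in fact cleaner than the paper's (which has a harmless typo $p^{2r-1/2}$ for $p^{r-1/2}$); the one small slip is saying the $p\equiv 1$ case ``costs an extra $p^{1/2}$'' --- it is $p^{1/2}$ inside the square root and hence $p^{1/4}$ in the final bound, exactly as your conclusion states.
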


\begin{proof}
As previous, we have 
\begin{align*}
    &\mathcal{I}(P, R)=
\frac{|P||R|}{p^{2r}}+p^{2r}\sum_{\bfm\ne \bfo }\sum_{(g, \bfz)\in R}\widehat{P}(-\bfm, g\bfm)\chi_r(-\bfm \cdot \bfz)=:I+II,
\end{align*}
and 
\begin{align*}
  II&
 \leq p^{3r}|R|^{\frac{1}{2}}\left(\sum_{g\in G_r}\sum_{\bfm \ne \bfo}|\widehat{P}(\bfm, -g\bfm)|^2\right)^{\frac{1}{2}}\\
 &\ll p^{3r}|R|^{\frac{1}{2}}\left(\sum_{\substack{\bfm \ne \bfo,\\
    \bfm' \in V_\bfm}} p^{v_{\bfm}} |\widehat{A}(\bfm)|^2|\widehat{B}(\bfm')|^2\right)^{\frac{1}{2}}.
\end{align*}
%

To prove the first assertion, we use Corollary \ref{key-2-d} and obtain
\begin{align*}
    II &  \ll p^{3r}|R|^{\frac{1}{2}}\l( p^{-4r-1}|A||B|^{\frac{3}{2}}\r)^{\frac{1}{2}}= p^{2r-\frac{1}{2}}|R|^{\frac{1}{2}}|P|^{\frac{1}{2}} |B|^{\frac{1}{4}}.
\end{align*} 

To prove the second assertion, we use Corollary \ref{keyy-2-d} and obtain
\begin{align*}
    II &  \ll p^{3r}|R|^{\frac{1}{2}}\l( p^{-4r-\frac{1}{2}}|A||B|^{\frac{3}{2}} \r)^{\frac{1}{2}}= p^{r-\frac{1}{4}}|R|^{\frac{1}{2}}|P|^{\frac{1}{2}} |B|^{\frac{1}{4}}.
\end{align*} 
This completes the proof.
\end{proof}

\begin{remark}
Compared to Theorem \ref{thm:9.5}, Theorem \ref{thm:9.4} gives improvements in the ranges $|B|<p^{2r}$ and $|B|<p^{2r-1}$, corresponding to $p\equiv 3\pmod 4$ and $p\equiv 1\pmod 4$, respectively. 
\end{remark}

\section{Proof of main theorems}

\begin{proof} [Proof of Theorem \ref{thm:9.9}]
Since $\delta_A^{1/2}\cdot\delta_B\ge 2p^{-1}$, we have $\delta_A\ge 4p^{-2}$. Thus, $|A|\ge 4p^{2r-2}$. The number of elements $\bfx$ in $(\Z/p^r\bZ)^2$ with $v_\bfx\neq 0$ does not exceed $p^{2r-2}$. So, without loss of generality, we may assume that $|A|\geq 3p^{2r-2}$ and  $v_\bfx =0$ for any $\bfx \in A$. 

    Let 
    \[ \widetilde{G} = \lo  g \in G_r\colon \lv \lo \bfz \in (\Zbb /p^r \Zbb)^2 \colon  B \cap (g A +\bfz ) =\emptyset \ro \rv \ge \frac{p^{2r}}{2} \ro .\]
    It is sufficient to show that $|\widetilde{G}| \ll |G_r|$. 

Let $\widetilde{R}$ be the set of pairs $(g ,\bfz ) \in \mathcal{R}_r$ such that $g \in \widetilde{G}$ and  $ B \cap (g A +\bfz ) = \emptyset$. It is clear that $\mathcal{I}(B \times A ,\widetilde{R}) = 0$. Alternatively, from Theorem \ref{thm:9.4} (1), we have
    \begin{equation} \label{thmproof_eq1}
    \lv \mathcal{I}(B\times A ,\widetilde{R}) -\frac{|A||B||\widetilde{R}|}{p^{2r}}\rv  \ll p^{r-\frac
    {1}{2}}|A|^{\frac{3}{4}}|B|^{\frac{1}{2}}|\widetilde{R}|^{\frac{1}{2} }.
    \end{equation}
    Thus, we deduces that
    \[|\widetilde{R}| \ll p^{6r-1} |A|^{-\frac{1}{2}}|B|^{-1}. \]

On the other hand, one sees by the construction of $\widetilde{G}$ that $| \widetilde{R}| \ge |\widetilde{G}| \cdot p^{2r}/2$, we have
    \begin{equation} \label{thmproof_eq2}
    |\widetilde{G} | \ll p^{4r-1} |A|^{-\frac{1}{2}}|B|^{-1}.
    \end{equation}
It leads to $|\widetilde{G} |\ll p^r$, in view of the condition $|A|^{1/2}|B| \ge 2p^{3r-1}.$
    
    
    
\end{proof}

In above arguments, if we use Theorem \ref{thm:9.4} (2), then the condition $|A|^{1/2}|B|\ge 2p^{3r-\frac{1}{2}}$ is required. By a direct computation, we can see that this condition is worse than those of Theorem \ref{thm:9.10}.  

\begin{proof} [Proof of Theorem \ref{thm:9.10}]
The proof is same as that of Theorem  \ref{lem_Ehat_Vm}. In this case, one deduces from $\delta_A\cdot\delta_B\ge 2p^{-1}$ that $\delta_A\ge 2p^{-1}$. Then the cardinality of $A$ is at least $2p^{2r-1}$, which is much larger than the number of elements $\bfx$ with $v_\bfx\neq 0$. From Theorem \ref{thm:9.5}, the bound on the right-hand side of \eqref{thmproof_eq1} is replaced by $p^{\frac{3r-1}{2}}|A|^{\frac{1}{2}}|B|^{\frac{1}{2}}|\widetilde{R}|^{\frac{1}{2}}$. And \eqref{thmproof_eq2} becomes 
\[
|\widetilde{G}| \ll p^{-2r}|\widetilde{R}| \ll p^{5r-1}|A|^{-1}|B|^{-1},
\]
which gives $|\widetilde{G}| \ll p^r$ provided that $|A||B|\geq 2p^{4r-1}$.
\end{proof}

\section{Acknowledgements} Thang Pham would like to thank the Vietnam Institute for Advanced Study in Mathematics (VIASM) for the hospitality and for the excellent working condition.

Thang Pham, Nguyen Duy Phuong, and Le Quang Ham were supported by Vietnam National Foundation for Science and Technology Development (NAFOSTED) under grant number 101.99--2021.09.

\section*{Appendix: Proof of Theorem \ref{restriction6}}
As shown in \cite{P.X}, the proof of Theorem \ref{restriction6} is almost identical with that of Theorem \ref{restriction3} for the case $p\equiv 3\pmod 4$. For clarity, we provide a detailed proof here. 


In this section, the letter $p$ is always a prime with $p \equiv 1 (\mod 4)$, and $r$ is a positive integer. Recall that $G_r=SO_2(\Z/p^r\Z)$.

\begin{lemma}[\cite{P.X}, Lemma 2.1] \label{lem_Hensel}
For any prime $p$, let
\[
\mathbf{G}(\bfx) = \big(\widetilde{G}(x_1,\ldots,x_n),\ldots,G_m(x_1,\ldots,x_n)\big)
\]
be a map from $\bZ^n$ to $\bZ^m$, with $G_i$ polynomials with integer coefficients. Let $l$ be a positive integer and $\bfy\in \bZ^n$. Suppose that $\mathbf{G}(\bfy)\equiv \bfo$ $(\mod p^l)$. Let $R=\text{rank} J_{\mathbf{G}}(\bfy)$, where $J_{\mathbf{G}}(\bfy)$ is the Jocobi matrix modulo $p$, i.e.,
\[
J_{\mathbf{G}}(\bfy)=\begin{bmatrix}
\frac{\partial \widetilde{G}}{\partial x_1}(\bfy) &\frac{\partial \widetilde{G}}{\partial x_2}(\bfy) &\ldots &\frac{\partial \widetilde{G}}{\partial x_n}(\bfy) \\
\frac{\partial G_2}{\partial x_1}(\bfy) &\frac{\partial G_2}{\partial x_2}(\bfy) &\ldots &\frac{\partial G_2}{\partial x_n}(\bfy) \\
\ldots &\ldots &\ldots &\ldots \\
\frac{\partial G_m}{\partial x_1}(\bfy) &\frac{\partial G_m}{\partial x_2}(\bfy) &\ldots &\frac{\partial G_m}{\partial x_n}(\bfy)
\end{bmatrix}\qquad (\mod p).
\]
Then
\begin{equation} \label{eq0}
\#\big\{\bfz\,(\mod p^k):\, \mathbf{G}(\bfy+p^l\bfz)\equiv \mathbf{0}\,(\mod p^{l+k})\big\} \leq p^{k(n-R)}
\end{equation}
for any integer $k\geq 1$. When $R=m$, the ``$\leq$'' can be replaced by ``$=$''.
\end{lemma}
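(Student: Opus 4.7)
The plan is to prove this Hensel-type lifting estimate via a two-step reduction: first, Taylor expand each polynomial $G_i$ around $\bfy$ to convert the congruence modulo $p^{l+k}$ into a linear system modulo $p$ with coefficient matrix $J_{\mathbf{G}}(\bfy)$; second, iterate this linearization by induction on $k$, lifting solutions from modulus $p^{l+k}$ to modulus $p^{l+k+1}$ one step at a time.

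For the base case $k=1$, I would use that each $G_i$ is a polynomial with integer coefficients to write
\[
G_i(\bfy + p^l \bfz) = G_i(\bfy) + p^l \sum_{t=1}^{n} \frac{\partial G_i}{\partial x_t}(\bfy)\, z_t + p^{2l}\, R_i(\bfy,\bfz)
\]
with $R_i(\bfy,\bfz) \in \bZ$. Since $l \geq 1$ gives $2l \geq l+1$, the remainder is absorbed modulo $p^{l+1}$. Writing $\mathbf{G}(\bfy) = p^l \bfc$ for some $\bfc \in \bZ^m$ (using the hypothesis), the congruence $\mathbf{G}(\bfy+p^l\bfz)\equiv \bfo\pmod{p^{l+1}}$ collapses to the $\bF_p$-linear system $J_{\mathbf{G}}(\bfy)\,\bfz \equiv -\bfc \pmod p$. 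Since $J_{\mathbf{G}}(\bfy)$ has $\bF_p$-rank $R$, this system has either no solutions or exactly $p^{n-R}$ solutions in $\bF_p^n$, so the count is $\leq p^{n-R}$ either way. When $R=m$, the Jacobian is surjective as a map $\bF_p^n \to \bF_p^m$, so every right-hand side lies in its image and the count equals $p^{n-R}$.

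The inductive step applies the same mechanism one level higher. Given $\bfz_0 \pmod{p^k}$ satisfying $\mathbf{G}(\bfy + p^l \bfz_0) \equiv \bfo \pmod{p^{l+k}}$, I would parametrize candidate lifts as $\bfz = \bfz_0 + p^k \bfw$ and Taylor expand $\mathbf{G}(\bfy + p^l \bfz_0 + p^{l+k}\bfw)$. The remainder now lies in $p^{2(l+k)}\bZ$, which is absorbed modulo $p^{l+k+1}$ since $l+k \geq 1$. Moreover $J_{\mathbf{G}}(\bfy + p^l \bfz_0) \equiv J_{\mathbf{G}}(\bfy) \pmod p$ because $l \geq 1$, so the coefficient matrix in the linearization for $\bfw$ is the same modulo $p$ and still has rank $R$. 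Thus each fixed $\bfz_0 \pmod{p^k}$ admits at most $p^{n-R}$ lifts $\bfw \pmod p$ (exactly that many when $R=m$), so if $N_k$ denotes the count modulo $p^k$, then $N_{k+1} \leq N_k \cdot p^{n-R}$. Induction yields $N_k \leq p^{k(n-R)}$, with equality throughout when $R=m$.

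The main obstacle is purely bookkeeping: one must verify at each stage that the Taylor remainder carries a sufficiently high power of $p$ to drop out of the relevant modulus, which is exactly where the hypothesis $l \geq 1$ enters (so that both $2l \geq l+1$ in the base case and $2(l+k) \geq l+k+1$ in the induction step hold, and moreover $J_{\mathbf{G}}$ at a perturbed point agrees with $J_{\mathbf{G}}(\bfy)$ modulo $p$). Beyond that, the argument reduces to the standard dimension count for the affine solution set of a rank-$R$ linear system over $\bF_p$.
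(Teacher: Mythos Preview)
The paper does not prove this lemma at all; it is merely quoted from \cite{P.X} (Lemma 2.1) and used as a black box in the appendix. There is therefore no ``paper's proof'' to compare against.

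Your argument is the standard Hensel-lifting proof and is correct. The Taylor expansion step is justified because for a polynomial $G_i$ with integer coefficients, $G_i(\bfy+\bfh)$ is a polynomial in $\bfh$ with integer coefficients whose terms of degree $\geq 2$ in $\bfh$ are divisible by $p^{2l}$ when $\bfh=p^l\bfz$; the linearization to a rank-$R$ system over $\bF_p$ and the induction on $k$ then go through exactly as you describe. The only point worth making explicit in a write-up is the equality case: when $R=m$ the map $J_{\mathbf G}(\bfy)\colon \bF_p^n\to\bF_p^m$ is surjective, so the affine system at each lifting step has \emph{exactly} $p^{n-R}$ solutions, and equality propagates through the induction.
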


\begin{lemma}\label{lem_car_Gr1}
    We have $\left\vert G_r \right\vert =p^r (1-1/p)$.
\end{lemma}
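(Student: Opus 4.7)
The plan is to count $|G_r|$ directly as the number of pairs $(a,b)\in (\Z/p^r\Z)^2$ with $a^2+b^2\equiv 1\pmod{p^r}$, first modulo $p$ and then lifting via Hensel's lemma (Lemma \ref{lem_Hensel}).

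For the mod-$p$ count, I would use that since $p\equiv 1\pmod 4$, there exists $i\in\mathbb{F}_p$ with $i^2\equiv -1$, so $a^2+b^2$ factors over $\mathbb{F}_p$ as $(a+ib)(a-ib)$. The linear change of variables $(a,b)\mapsto (u,v):=(a+ib,\,a-ib)$ is a bijection of $\mathbb{F}_p^2$ (its inverse $(u,v)\mapsto((u+v)/2,(u-v)/(2i))$ exists since $p$ is odd), under which $a^2+b^2\equiv 1\pmod p$ becomes $uv\equiv 1\pmod p$. The latter has exactly $p-1$ solutions, one for each $u\in\mathbb{F}_p^\times$ with $v=u^{-1}$.

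To lift, I would apply Lemma \ref{lem_Hensel} to $\mathbf{G}(a,b)=a^2+b^2-1$ with $n=2$, $m=1$, $l=1$, and $k=r-1$. The Jacobian is $(2a,\,2b)$ modulo $p$; since $p$ is odd and any solution satisfies $a^2+b^2\equiv 1\not\equiv 0\pmod p$, we cannot have $a\equiv b\equiv 0\pmod p$, so $R=\rank J_\mathbf{G}=1=m$ at every mod-$p$ solution. The equality clause of Lemma \ref{lem_Hensel} then gives exactly $p^{(r-1)(n-R)}=p^{r-1}$ lifts of each mod-$p$ solution to a solution modulo $p^r$.

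Combining the two counts yields $|G_r|=(p-1)\cdot p^{r-1}=p^r(1-1/p)$, which is the claim. This also serves as a consistency check against Lemma \ref{lem_staborb1}, which predicts $|\orb_r(\bfx)|=p^r(1-1/p)$ for any $\bfx$ with $v_\bfx=0$ and trivial stabilizer. No step here is really an obstacle; the only point worth verifying is that the Jacobian has rank $1$ at every $\mathbb{F}_p$-point of the circle, which is immediate from $p$ being odd together with the fact that $(0,0)$ does not lie on the mod-$p$ unit circle.
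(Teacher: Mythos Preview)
Your proposal is correct and follows essentially the same route as the paper: count solutions of $a^2+b^2\equiv 1$ modulo $p$, check that the Jacobian $(2a,2b)$ is nonzero at every such solution, and lift via the equality case of Lemma~\ref{lem_Hensel} to get $p^{r-1}$ lifts per base point. The only difference is that the paper simply asserts $|G_1|=p-1$, whereas you supply the explicit factorization $(a+ib)(a-ib)$ argument for the mod-$p$ count; the lifting step is identical.
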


\begin{proof}
   For $r=1$, we have $|G_1|=p-1$. Now consider the circumstances that $r\geq 2$. For any $\theta=\begin{bmatrix}a & -b\\b &a\end{bmatrix} \in G_r$, there is  some $\theta_0 = \begin{bmatrix}a_0 & -b_0\\b_0 &a_0\end{bmatrix}\in G_1$ such that $\theta\equiv \theta_0\,(\mod p)$. Applying Lemma \ref{lem_Hensel} to $(a_0,b_0)$ and the polynomial $F(x,y)=x^2+y^2-1$, one obtains that $(\nabla F)(a_0,b_0) = (2a_0, 2b_0) \not\equiv (0,0) \,(\mod p)$, since $a_0^2+b_0^2\equiv 1 \pmod p$. Thus,
\[
\#\left\{(z_1,z_2)\,(\mod p^{r-1}):\, (a_0+pz_1)^2+(b_0+pz_2)^2 \equiv 1\,(\mod p^r)\right\} = p^{r-1}.
\]
It follows that
\[
|G_r|=p^{r-1}|G_1|=p^{r}(1-1/p). 
\]
\end{proof}


\begin{proof}[Proof of Lemma \ref{lem_staborb1}]
    Here $0 \le v_\bfx \le r-1$. The equation $\theta \bfx \equiv \bfx (\mod p^r)$ is equivalent to 
    \begin{equation}
        \label{eq_rot3}
        \begin{bmatrix}
            a-1 & -b \\
            b & a-1
        \end{bmatrix} \begin{bmatrix}
            p^{v_\bfx} \tilde{x_1} \\
            p^{v_\bfx} \tilde{x_2}
        \end{bmatrix} \equiv \begin{bmatrix}
            0 \\
            0
        \end{bmatrix} \quad (\mod p^r) ,
    \end{equation}
    or equivalently,
    \begin{equation} \label{eq_rot4}
\begin{bmatrix}
\tilde{x_1} &-\tilde{x_2}\\
\tilde{x_2} & \tilde{x_1}
\end{bmatrix}
\begin{bmatrix}
a-1\\
b
\end{bmatrix}\equiv
\begin{bmatrix}
0\\
0
\end{bmatrix}\quad (\mod p^{r-v_\bfx}).
\end{equation}

If $\tilde{x_1}^2+\tilde{x_2}^2 \not\equiv 0 \, (\mod p) $, then the coefficient matrix is invertible modulo $p^r$, and
\[
\begin{bmatrix}
a\\
b
\end{bmatrix}\equiv
\begin{bmatrix}
1\\
0
\end{bmatrix},\quad (\mod p^{r-v_\bfx}).
\]
By Lemma \ref{lem_Hensel}, the number of $(a,b)\in (\bZ/p^r \bZ)^2$ satisfying $a^2+b^2\equiv 1 \pmod {p^r}$ and the above equivalence is exactly $p^{v_\bfx}$. 

If 
$\tilde{x_1}^2+\tilde{x_2}^2 \equiv 0 \, (\mod p)$, then one deduce from $(\tilde{x_1},\tilde{x_2}) \not\equiv (0,0) \pmod p$ that $\tilde{x_1}, \tilde{x_2}\not\equiv 0 \pmod p$. Therefore, let $u=a-1, v=b$, we have a system of
linear equation
\[ \begin{cases}
    \tilde{x_1}u -\tilde{x_2}v \equiv 0 \\
    \tilde{x_2}u +\tilde{x_1}v \equiv 0 \\
    (u+1)^2 +v^2 \equiv 1
\end{cases} \quad (\mod p^{r-v_\bfx}). \]
It follows that
\[\begin{cases}
    u \equiv \tilde{x_1}^{-1}\tilde{x_2}v \\
    2\tilde{x_1}\tilde{x_2}v \equiv 0
\end{cases} \quad (\mod p^{r-v_\bfx}).\]
Hence, $(u,v)\equiv (0,0) \pmod {p^{r-v_\bfx}}$, or equivalently, $(a,b)\equiv (1,0) \pmod {p^{r-v_\bfx}}$. By Lemma \ref{lem_Hensel} again, the number of such $(a,b)$ modulo $p^r$ is exactly $p^{v_\bfx}$.
So
\[
\big|\textsf{stab}_\bfx\big| = p^{v_\bfx}.
\]
It then follows that $|\textsf{orb}(\bfx)| = |G_r|/|\textsf{stab}_\bfx| = p^{r-v_\bfx}(1-1/p)$.

Moreover, for any $\theta\in G_r$, there is some $\theta_0\in G_{r-v_\bfx}$ such that $\theta \equiv \theta_0\,(\mod p^{r-v_\bfx})$. It can be verified that $\theta \tilde{\bfx} \equiv \theta_0 \tilde{\bfx}\, (\mod p^{r-v_\bfx})$. So 
\[
\textsf{orb}_r(\bfx) = \big\{\theta (p^{v_\bfx} \tilde{\bfx}):\, \theta \in G_r\big\} = \big\{p^{v_\bfx} \theta_0\tilde{\bfx}:\, \theta_0 \in G_{r-v_\bfx}\big\}.
\]
Note that $|G_{r-v_\bfx}|=p^{r-v_\bfx}(1-1/p)$ by Lemma \ref{lem_car_Gr1}, the elements on the right-hand side of the above formula give different members of the orbit. The proof is completed.
\end{proof}

\begin{lemma}\label{energy4}
    Let $p\equiv 1\, (\mod 4)$. Suppose that $ \bfm =p^{v_{\bfm}}\tilde{\bfm} \in (\bZ /p^r \bZ)^2$ with $\tilde{\bfm} \in \l( (\bZ /p^{r-v_{\bfm}} \bZ)^\ast \r)^2$. Then for any $\bfz \in (\bZ /p^r\bZ)^2$, we have
    \begin{equation} \label{eq_app_proof_1}
    \# \{ (\bfx ,\bfy )\in (\textsf{orb}_r (\bfm))^2 \colon \bfx -\bfy \equiv \bfz (\mod p^r) \} \ll \begin{cases}
        p^{r-v_{\bfm}-1}, & \text{if $\Vert \tilde{\bfm}\Vert \not\equiv 0\, (\mod p)$},\\
        p^{r-v_{\bfm}},  & \text{if $\Vert \tilde{\bfm} \Vert \equiv 0 \, (\mod p)$}.
    \end{cases} 
    \end{equation}
\end{lemma}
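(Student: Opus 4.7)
The plan is to transport the counting problem to split coordinates via the $p$-adic Gaussian integers (available when $p \equiv 1 \pmod 4$), reducing it to analyzing a single quadratic equation in one variable modulo $p^{r'}$, where $r' := r - v_\bfm$.

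First I would reduce to the case $v_\bfm = 0$. Since every element of $\orb_r(\bfm)$ is divisible by $p^{v_\bfm}$, the condition $\bfx - \bfy \equiv \bfz \pmod{p^r}$ forces $\bfz \equiv \bfo \pmod{p^{v_\bfm}}$ (else the count vanishes), so we may write $\bfz = p^{v_\bfm}\tilde{\bfz}$. Lemma \ref{lem_staborb1} gives $|\stab_{r'}(\tilde{\bfm})| = 1$ (since both coordinates of $\tilde{\bfm}$ are units, forcing $v_{\tilde{\bfm}} = 0$), and the reduction map $G_r \to G_{r'}$ is surjective with kernel of size $p^{v_\bfm}$. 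Direct bookkeeping shows that the count equals
\[
N := \#\bigl\{(\theta_1, \theta_2) \in G_{r'}^2 : (\theta_1 - \theta_2)\tilde{\bfm} \equiv \tilde{\bfz} \pmod{p^{r'}}\bigr\},
\]
so it suffices to bound $N$ by $p^{r'-1}$ in Case I and by $p^{r'}$ in Case II.

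Next I would Hensel-lift (via Lemma \ref{lem_Hensel}) a square root of $-1$ to $k \in \bZ/p^{r'}\bZ$ with $k^2 \equiv -1 \pmod{p^{r'}}$, and use the $\bZ/p^{r'}$-module isomorphism $\varphi(a, b) := (a + kb,\, a - kb)$, which converts the form $a^2 + b^2$ into the product form $xy$. Under $\varphi$, an element $\theta \in G_{r'}$ corresponds to a pair $(t, t^{-1})$ with $t \in (\bZ/p^{r'}\bZ)^*$, and writing $\varphi(\tilde{\bfm}) = (\alpha, \beta)$, $\varphi(\tilde{\bfz}) = (\gamma, \delta)$, the orbit becomes $\{(t\alpha, t^{-1}\beta)\}$ and the equation reads
\[
(t_1 - t_2)\alpha \equiv \gamma, \qquad (t_1^{-1} - t_2^{-1})\beta \equiv \delta \pmod{p^{r'}}.
\]
In Case I ($\|\tilde{\bfm}\| \not\equiv 0 \pmod p$, so both $\alpha, \beta$ are units), eliminating $t_1 - t_2$ between the two equations gives $t_1 t_2 = -\gamma\beta/(\alpha\delta)$ in the only non-trivial sub-case ($\delta$ a unit), so $t_2$ satisfies a quadratic whose discriminant $D$ is explicit; the number of square roots of $D$ modulo $p^{r'}$ is at most $2 p^{\lfloor v_p(D)/2 \rfloor} \leq 2 p^{\lfloor r'/2 \rfloor} \ll p^{r'-1}$ for $r' \geq 2$ (and the case $r' = 1$ is trivial). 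In Case II ($\|\tilde{\bfm}\| \equiv 0 \pmod p$), exactly one of $\alpha, \beta$ is a unit; say $\alpha$ is a unit and $\beta = p^l \beta_0$ for a unit $\beta_0$ and $l \geq 1$. The sub-case $\delta = 0$ releases the constraint on $t_1 t_2$, contributing up to $|G_{r'}| \sim p^{r'}$ solutions, while the sub-case $\delta \neq 0$ reduces to a quadratic count as in Case I; combining yields $N \ll p^{r'}$.

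The main obstacle is the bookkeeping of valuations in Case II: the count becomes $\sim p^{r'}$ only in the precise sub-case $\delta = 0$ with $\gamma\beta \equiv 0 \pmod{p^{r'}}$, and one must verify the uniform $p^{r'}$ bound across all admissible $\tilde{\bfz}$ and all valuations $l$ of $\beta$ (in particular the extreme $l = r'$ case, where $\|\tilde{\bfm}\| \equiv 0 \pmod{p^{r'}}$ and one of the two split equations degenerates entirely). A subsidiary point: for $\bfz = \bfo$ the count is $|\orb_r(\bfm)| \sim p^{r-v_\bfm}$, which slightly exceeds the Case I bound $p^{r-v_\bfm - 1}$; this is consistent with the lemma being used effectively for $\bfz \neq \bfo$, since the diagonal contribution is already separated out by the energy formula invoked in the proof of Lemma \ref{lem_Ehat_Vm1}.
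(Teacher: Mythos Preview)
Your approach is correct and genuinely different from the paper's. The paper parametrises $\orb_r(\bfm)$ by $G_r$, writes down the four-variable system
\[
(a-a')m_1-(b-b')m_2\equiv z_1,\quad (b-b')m_1+(a-a')m_2\equiv z_2,\quad a^2+b^2\equiv 1,\quad a'^2+b'^2\equiv 1,
\]
bounds its solutions modulo $p$ by $2$, verifies that the Jacobi matrix has rank $\ge 3$, and invokes the Hensel-type Lemma~\ref{lem_Hensel} to lift. You instead exploit that $SO_2$ splits over $\bZ/p^{r'}\bZ$ when $p\equiv 1\pmod 4$: the change of variables $\varphi$ diagonalises the action to $(t,t^{-1})$ and collapses the count to a monic quadratic in a single variable. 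This is more elementary (no Jacobian computation) and in Case~I actually delivers the sharper $O(p^{\lfloor r'/2\rfloor})$; the paper's route, by contrast, uses nothing specific to $p\equiv 1\pmod 4$ and runs verbatim for $p\equiv 3\pmod 4$, which is why the appendix can call the two proofs ``almost identical''.

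Two minor remarks. Your phrase ``the only non-trivial sub-case ($\delta$ a unit)'' skips the possibility $v_p(\gamma)=v_p(\delta)=s$ with $0<s<r'$, but there the product $t_1t_2$ is still forced to be a unit and the discriminant $u^2/4+t_1t_2$ is a unit as well, so one gets at most two roots and the bound only improves. Your Case~II worry is also unnecessary: since $t_1$ is determined by $t_2$ through the $\alpha$-equation, the trivial bound $N\le |G_{r'}|\sim p^{r'}$ already matches the claim, and this is exactly how the paper handles that case. Finally, your observation about $\bfz=\bfo$ is correct---the diagonal count $|\orb_r(\bfm)|\sim p^{r-v_\bfm}$ does violate the Case~I bound, so the lemma as stated really needs $\bfz\ne\bfo$; the paper only invokes it for $\zeta\ne\bfo$ in the proof of Theorem~\ref{restriction6}, so the application is unaffected.
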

\begin{proof}
When $\bfm=\bfo$, the conclusion holds automatically. In the following, we assume that $\bfm\neq \bfo$. By Lemma \ref{lem_staborb1}, one has $\textsf{orb}_r(\bfm)=p^{v_\bfm}\orb_{r-v_\bfm}(\tilde{m})$. Let us write $\bfx=p^{v_\bfm}\tilde{\bfx}$ and $\bfy=p^{v_\bfm}\tilde{\bfy}$, with $\tilde{\bfx},\,\tilde{\bfy} \in \orb_{r-v_\bfm}(\tilde{\bfm})$. 
When $v_{\bfz}<v_{\bfm}$, the equation $\bfx-\bfy\equiv \bfz \pmod {p^r}$ has no solution. When $v_{\bfz}\geq v_{\bfm}$, we denote $\bfz=p^{v_\bfm}\tilde{\bfz}$ and obtain that $\tilde{\bfx}-\tilde{\bfy}\equiv \tilde{\bfz} \pmod {p^{r-v_\bfm}}$. Noting that $v_{\tilde{\bfm}}=0$, it is sufficient to show that
\[ \# \{ (\tilde{\bfx} ,\tilde{\bfy} )\in (\textsf{orb}_r (\tilde{\bfm}))^2 \colon \tilde{\bfx} -\tilde{\bfy} \equiv \tilde{\bfz} (\mod p^{r-v_\bfm}) \} \ll \begin{cases}
        p^{(r-v_{\bfm})-v_{\tilde{\bfm}}-1}, & \text{if $\Vert \tilde{\bfm}\Vert \not\equiv 0\, (\mod p)$},\\
        p^{(r-v_{\bfm})-v_{\tilde{v_m}}},  & \text{if $\Vert \tilde{\bfm} \Vert \equiv 0 \, (\mod p)$}.
    \end{cases} \]
    Therefore, we may assume at the beginning of the proof that $v_\bfm=0$.

Since $\bfy$ is determined by $\bfx$ modulo $p^r$, the cardinality on the left-hand-side of \eqref{eq_app_proof_1} does not exceed $|\textsf{orb}_r (\tilde{\bfm})|$, which is $\sim p^r$ by Lemma \ref{lem_staborb1}. The second upper bound follows. 

Next, we consider the case $\|\bfm\|\not\equiv 0 \pmod p$. We have assumed that $v_\bfm=0$, so $|\textsf{stab}_r(\bfm)|=1$ by Lemma \ref{lem_staborb1}. For $\bfx ,\bfy \in \textsf{orb}_r(\bfm)$, there exist unique $a,b,a',b'\in \bZ /p^r \bZ$  such that $a^2+b^2 \equiv a'^2 +b'^2 \equiv 1\, (\mod p^r)$ and 
    \[ \begin{bmatrix}
        a & -b \\
        b & a
    \end{bmatrix}\bfm \equiv \bfx,\quad \begin{bmatrix}
        a' & -b'\\
        b' & a'
    \end{bmatrix}\bfm \equiv \bfy, \quad \pmod {p^r}.\]
    The set on the left-hand side of \eqref{eq_app_proof_1} involves a system of congruences
    \begin{equation}
        \label{eq_sys4}
        \begin{cases}
        (a-a')m_1 -(b-b')m_2 \equiv z_1,\\
        (b-b')m_1 + (a-a')m_2 \equiv z_2,\\
        a^2 + b^2 \equiv 1,\\
        a'^2 +b'^2 \equiv 1.
    \end{cases} 
    \end{equation} 
    We first consider \eqref{eq_sys4} modulo $p$. 

    Since $m_1^2+m_2^2\not\equiv 0 \pmod p$, either $m_2$ or $m_1 \not\equiv 0\, \pmod p$. Without loss of generality, we assume that $m_1 \not\equiv 0\, (\mod p)$, so one sees that
    \[ \begin{cases}
        a-a' \equiv m_1^{-1}(z_1 +(b-b')m_2),\\
        (b-b')m_1 + m_1^{-1}(z_1+(b-b')m_2)m_2 \equiv z_2, 
    \end{cases} \quad (\mod p), \]
    or equivalently,
    \[ \begin{cases}
        a-a' \equiv m_1^{-1}(z_1 +(b-b')m_2), \\
        (b-b')(m_1^2+m_2^2) \equiv z_2m_1-z_1m_2. 
    \end{cases} \quad (\mod p). \]
   Now $(a,b)$ is determined by $(a',b')$ and system \eqref{eq_sys4} has at most $2$ solutions modulo $p$.

    In the following, we will apply Hensel's lemma. The Jacobi matrix of $G(a,b,a',b') = ((a-a')m_1-(b-b')m_2-z_1 ,(b-b')m_1+(a-a')m_2-z_2,a^2+b^2-1,a'^2+b'^2-1)$ in $(a,b,a',b')$ is given by
    \[ \begin{bmatrix}
        m_1 & -m_2 & -m_1 & m_2\\
        m_2 & m_1 & -m_2 & -m_1\\
        2a & 2b & 0 & 0\\
0 & 0 & 2a' & 2b'
    \end{bmatrix}.\]
    By elementary operations, we obtain
    \[ \begin{bmatrix}
        0 & 0 & -m_1 & m_2\\
        0 & 0 & -m_2 & -m_1\\
        2a & 2b & 0 & 0\\
        2a' & 2b' & 2a' & 2b'
    \end{bmatrix}. \]
    In view of the fact that $m_1^2+m_2^2 \not\equiv 0 \, (\mod p)$, the rank of the above Jacobi matrix, modulo $p$, is at least $3$. By Lemma \ref{lem_Hensel}, the number of solutions of $(a,b,a',b')$ to \eqref{eq_sys4} modulo $p^r$ is at most $p^{r-1}$. The first upper bound then follows. 

\end{proof}

Next, we complete the proof of Theorem \ref{restriction6}.

\begin{proof}[Proof of Theorem \ref{restriction6}]
We have
    \begin{align*}
    &\sum_{\bfm\in (\mathbb{Z} / p^r\Zbb)^2}|(fd\sigma_{V_\bfm})^\vee(\bfm)|^4=\sum_{\bfm}\left\vert \frac{1}{|V_{\bfm}|}\sum_{\bfx\in V_{\bfm}}\chi_r(\bfm\cdot \bfx)f(\bfx)\right\vert^4\\
    &=\frac{p^{2r}}{|V_{\bfm}|^4}\sum_{\xi,\xi',\eta,\eta'\in V_{\bfm}\colon \xi-\eta=\xi'-\eta'}f(\xi)f(\xi')\overline{f(\eta)f(\eta')}
\end{align*}

Moreover,
\[
\sum_{\xi,\xi',\eta,\eta'\in V\colon \xi-\eta=\xi'-\eta'}f(\xi)f(\xi')\overline{f(\eta)f(\eta')} = \sum_{\zeta} \left|\sum_{\xi-\eta=\zeta}f(\xi)\overline{f(\eta)}V_{\bfm}(\xi)V_{\bfm}(\eta) \right|^2.
\]


For $\zeta\equiv \mathbf{0}\,(\mod p^r)$, we have
\[\sum_{\zeta\equiv \mathbf{0}\,(\mod p^r)} \left| \sum_{\xi-\eta=\zeta}f(\xi)\overline{f(\eta)}V_{\bfm}(\xi)V_{\bfm}(\eta)\right|^2\ll \left(\sum_{\xi\in V_{\bfm}}|f(\xi)|^2\right)^2.\]

For $\zeta\not\equiv \mathbf{0}\,(\mod p^r)$, the Cauchy-Schwarz inequality implies
\begin{align*}
\phantom{-}& \sum_{\zeta\not\equiv \mathbf{0}\,(\mod p^r)} \left| \sum_{\xi-\eta=\zeta}f(\xi)\overline{f(\eta)}V_{\bfm}(\xi)V_{\bfm}(\eta)\right|^2 \\
&\leq \sum_{\zeta\not\equiv \mathbf{0}\,(\mod p^r)} \left( \sum_{\xi-\eta=\zeta}V_{\bfm}(\xi)V_{\bfm}(\eta) \right) \sum_{\xi-\eta=\zeta}|f(\xi)|^2|f(\eta)|^2V_{\bfm}(\xi)V_{\bfm}(\eta).\\
\end{align*}
Now, to bound the sum $ \sum_{\xi-\eta=\zeta}V_{\bfm}(\xi)V_{\bfm}(\eta) $, we use Lemma \ref{energy4} and get
\begin{align*}
    & \sum_{\zeta\not\equiv \mathbf{0}\,(\mod p^r)} \left| \sum_{\xi-\eta=\zeta}f(\xi)\overline{f(\eta)}V_{\bfm}(\xi)V_{\bfm}(\eta)\right|^2 \\
    \ll & \begin{cases}
        p^{r-v_\bfm -1} \l( \sum_{\xi \in V_\bfm} |f(\xi)|^2 \r)^2 & \text{if $\lV \tilde{\bfm} \rV \not\equiv 0 \pmod p $}, \\
        p^{r-v_\bfm } \l( \sum_{\xi \in V_\bfm} |f(\xi)|^2 \r)^2 & \text{if $\lV \tilde{\bfm} \rV \equiv 0 \pmod p $}.
    \end{cases}
\end{align*}

So, one has 
\begin{align*}
    \left( \sum_{x\in (\bZ /p^r\bZ)^2} \vert (fd\sigma_{V_\bfm})^{\vee}(x) \vert^4\right)^{1/2} \ll  \frac{p^{r}}{\lv V_\bfm\rv^2 }  \begin{cases}
        p^{\frac{r-v_\bfm -1}{2}} \sum_{\xi \in V_\bfm } |f(\xi)|^2  & \text{if $\lV \tilde{\bfm} \rV \not\equiv 0 \pmod p $}, \\
        p^{\frac{r-v_\bfm}{2} }  \sum_{\xi \in V_\bfm} |f(\xi)|^2  & \text{if $\lV \tilde{\bfm} \rV \equiv 0 \pmod p $}.
    \end{cases}
\end{align*}

Next, Lemma \ref{lem_staborb1} gives that $|V_\bfm | \sim p^{r-v_\bfm}$. Hence, the theorem follows.


\end{proof}


\begin{thebibliography}{}

 \bibitem{CEHIK10} J. Chapman, M. B. Erdogan, D. Hart, A. Iosevich, and D. Koh, {\it Pinned distance sets, k-simplices,Wolff's exponent in finite fields and sum-product estimates}, Mathematische Zeitschrift, \textbf{271}(1--2) (2012), 63--93.
 
\bibitem{alex-fal}
L. Guth, A. Iosevich, Y. Ou, and H. Wang, \textit{On Falconer's distance set problem in the plane}, Inventiones mathematicae, \textbf{219}(3) (2019), 779--830.

\bibitem{KS}
D. Koh and H. Sun, \textit{Distance sets of two subsets of vector spaces over finite fields}, Proceedings of the American Mathematical Society, \textbf{143}(4) (2015), 1679--1692.

\bibitem{BL}
B. Lichtin, \textit{Distance and sum–product problems over finite p-adic rings}, Proceedings of the London Mathematical Society, \textbf{118}(6) (2019), 1450--1470.
\bibitem{BL2}
B. Lichtin, \textit{Averages of point configuration problems over finite p-adic rings}, Proceedings of the American Mathematical Society, \textbf{149}(7) (2021), 2825-2839.
\bibitem{BL3}
B. Lichtin, \textit{k-chain configurations of points over p-adic rings}, Proceedings of the American Mathematical Society, \textbf{151}(10)  (2023), 4113--4125.

\bibitem{mu2}
B. Murphy, G. Petridis, T. Pham, M. Rudnev, and S. Stevens, \textit{On the pinned distances problem over finite fields},  Journal of the London Mathematical Society, \textbf{105}(1) (2022), 469--499.

\bibitem{Mattila}
P. Mattila, \textit{Hausdorff dimension and projections related to intersections}, Publicacions
matematiques, \textbf{66}(1) (2022), 305--323.

\bibitem{M2023} P. Mattila, \textit{A survey on the Hausdorff dimension of intersections},  Mathematical and Computational Applications, \textbf{28}(2) (2023), 49.

 




\bibitem{P.X.2024} T. Pham, and B. Xue, {\it New-type Quasirandom Groups and Applications}, Journal of Fourier Analysis and Applications, 64 (2024).

\bibitem{P.X} T. Pham, and B. Xue, {\it On the distance problem over finite p-adic rings}, \href{https://arxiv.org/abs/2405.07325}{arXiv:2405.07325} (2024). 

\bibitem{P.Y.2023} T. Pham, and S. Yoo, {\it Intersection patterns and incidence theorems}, \href{https://arxiv.org/abs/2304.08004}{arXiv:2304.08004} (2023).


\end{thebibliography}
\end{document}